\documentclass[11pt, twoside]{article}
\usepackage{amssymb, amsmath, amsthm}
\usepackage[left=25mm, right=25mm, bottom=27mm]{geometry}
\usepackage{inputenc}
\usepackage{fancyhdr}
\usepackage{tikz}
\usetikzlibrary{positioning}
\usepackage{titling}
\usepackage{hyperref}
\predate{}
\postdate{}
\usepackage{lipsum}
\newtheorem{theorem}{Theorem}
\newtheorem{lemma}[theorem]{Lemma}

\newtheorem{conjecture}[theorem]{Conjecture}
\usepackage{titling}
\usepackage{xcolor}
\usepackage{placeins}
\usepackage{tikz}
\usepackage{verbatim}
\usepackage{titling}
\usepackage[T1]{fontenc}
\usepackage{currvita}
\usepackage{bbm}
\usepackage{enumitem}
\newtheorem{corollary}[theorem]{Corollary}

\newtheorem{claim5}{Claim}

\newtheorem{claim7}{Claim}
\newtheorem{claim8}{Claim}
\newtheorem{claim9}{Claim}

\newtheorem{lemma1}{Lemma}
\newtheorem{case1}{Case}

\theoremstyle{definition}

\theoremstyle{remark}

\newcommand{\cP}{\mathcal{P}}

\newcommand{\cF}{\mathcal{F}}

\newcommand{\cM}{\mathcal M}

\begin{document}
\newcommand{\Addresses}{
\bigskip
\footnotesize

\medskip

\noindent Maria-Romina~Ivan, \textsc{Department of Pure Mathematics and Mathematical Statistics, Centre for Mathematical Sciences, Wilberforce Road, Cambridge, CB3 0WB, UK,} and\\\textsc{Department of Mathematics, Stanford University, 450 Jane Stanford Way, CA 94304, USA.}\par\noindent\nopagebreak\textit{Email addresses: }\texttt{mri25@dpmms.cam.ac.uk, m.r.ivan@stanford.edu}

\medskip

\noindent Sean~Jaffe, \textsc{Trinity College, University of Cambridge, CB2 1TQ, UK.}\par\noindent\nopagebreak\textit{Email address: }\texttt{scj47@cam.ac.uk}}

\pagestyle{fancy}
\fancyhf{}
\fancyhead [LE, RO] {\thepage}
\fancyhead [CE] {MARIA-ROMINA IVAN AND SEAN JAFFE}
\fancyhead [CO] {SATURATION FOR SUMS OF POSETS AND ANTICHAINS}
\renewcommand{\headrulewidth}{0pt}
\renewcommand{\l}{\rule{6em}{1pt}\ }
\title{\Large{\textbf{SATURATION FOR SUMS OF POSETS AND ANTICHAINS}}}
\author{MARIA-ROMINA IVAN AND SEAN JAFFE}
\date{ }
\maketitle
\begin{abstract}
Given a finite poset $\mathcal P$, we say that a family $\mathcal F$ of subsets of $[n]$ is $\mathcal P$-saturated if $\mathcal F$ does not contain an induced copy of $\mathcal P$, but adding any other set to $\mathcal F$ creates an induced copy of $\mathcal P$. The saturation number of $\mathcal P$ is the size of the smallest $\mathcal P$-saturated family with ground set $[n]$.

The saturation numbers have been shown to exhibit a dichotomy: for any poset, the saturation number is either bounded, or at least $2\sqrt n$. The general conjecture is that in fact, the saturation number for any poset is either bounded, or at least linear.

The linear sum of two posets $\mathcal P_1$ and $\mathcal P_2$, dented by $\mathcal P_1*\mathcal P_2$, is defined as the poset obtained from a copy of $\mathcal P_1$ placed completely on top of a copy of $\mathcal P_2$. In this paper we show that the saturation number of $\mathcal P_1*\mathcal A_k*\mathcal P_2$ is always at least linear, for any $\mathcal P_1$, $\mathcal P_2$ and $k\geq2$, where $\mathcal A_k$ is the antichain of size $k$. This is a generalisation of the recent result that the saturation number for the diamond is linear (in that case $\mathcal P_1$ and $\mathcal P_2$ are both the single point poset, and $k=2$). We also show that, with the exception of chains which are known to have bounded saturation number, the saturation number for all complete multipartite posets is linear.
\end{abstract}
\section{Introduction}
We say that a poset $(\mathcal Q,\preceq)$ contains an \textit{induced copy} of a poset $(\mathcal P,\preceq')$ if there exists an injective order-preserving function $f:\mathcal P\rightarrow\mathcal Q$ such that $(f(\mathcal P),\preceq)$ is isomorphic to $(\mathcal P,\preceq')$. We denote by $\mathcal P([n])$ the power set of $[n]=\{1,2,\dots,n\}$. More generally, for any finite set $S$, we denote by $\mathcal P(S)$ the power set of $S$. We define the \textit{$n$-hypercube}, denoted by $Q_n$, to be the poset formed by equipping $\mathcal P([n])$ with the partial order induced by inclusion.

If $\mathcal P$ is a finite poset and $\mathcal F$ is a family of subsets of $[n]$, we say that $\mathcal F$ is $\mathcal P$-\textit{saturated} if $\mathcal F$ does not contain an induced copy of $\mathcal P$ and, for any $S\notin\mathcal F$, the family $\mathcal F\cup\{S\}$ contains an induced copy of $\mathcal P$. The smallest size of a $\mathcal P$-saturated family of subsets of $[n]$ is called the \textit{induced saturated number}, denoted by $\text{sat}^*(n,\mathcal P)$.

Poset saturation is a relatively young and rapidly growing area in combinatorics. Saturation for posets was introduced by Gerbner, Keszegh, Lemons, Palmer, P{\'a}lv{\"o}lgyi and Patk{\'o}s \cite{gerbner2013saturating}, although this was not for \textit{induced} saturation. Induced poset saturation was first introduced in 2017 by Ferrara, Kay, Kramer, Martin, Reiniger, Smith and Sullivan \cite{ferrara2017saturation}. It was soon noted that the saturation numbers for posets have a puzzling dichotomy. Keszegh, Lemons, Martin, P{\'a}lv{\"o}lgyi and Patk{\'o}s \cite{keszegh2021induced} proved that for any poset the induced saturated number is either bounded or at least $\log_2(n)$. Later, Freschi, Piga, Sharifzadeh and Treglown \cite{freschi2023induced} improved this result by replacing $\log_2 (n)$ with $2\sqrt{n}$. The most important conjecture in this area is the following.
\begin{conjecture}\label{conjecturelinear}
Let $\mathcal P$ be a finite poset. Then $\text{sat}^*(n,\mathcal P)$ is either bounded, or at least linear.
\end{conjecture}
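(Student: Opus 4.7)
My plan is a structural case split on $\mathcal{P}$. Since chains are known to have bounded saturation number, the first reduction is to assume $\mathcal{P}$ is not a chain, so $\mathcal{P}$ contains an antichain of size at least $2$. If $\mathcal{P}$ admits any decomposition $\mathcal{P} = \mathcal{P}_1 * \mathcal{A}_k * \mathcal{P}_2$ with $k \geq 2$ (allowing $\mathcal{P}_1$ or $\mathcal{P}_2$ to be empty), then the paper's main theorem already gives $\text{sat}^*(n,\mathcal{P}) = \Omega(n)$; similarly if $\mathcal{P}$ is complete multipartite. The first concrete task is therefore to isolate the \emph{residual} posets, namely those non-chain posets that admit no such decomposition. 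These are precisely the posets whose size-$\geq 2$ antichains are not cleanly ``sandwichable'', for example the fence posets, the $N$-poset, and more generally posets whose Hasse diagram contains diagonal edges cutting through every horizontal slice.

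For the residual posets I would run the usual saturation-contradiction argument. Assume for contradiction that $\mathcal{F}$ is $\mathcal{P}$-saturated with $|\mathcal{F}| \leq cn$ for a sufficiently small constant $c > 0$. For each $S \notin \mathcal{F}$ there is a witness $W(S) \subseteq \mathcal{F}$ with $|W(S)| = |\mathcal{P}| - 1$, together with a role $r(S) \in \mathcal{P}$, such that $W(S) \cup \{S\}$ is an induced copy of $\mathcal{P}$ with $S$ occupying position $r(S)$. The combinatorial type of $(W(S), r(S))$ takes at most $O\bigl(|\mathcal{F}|^{|\mathcal{P}| - 1} \cdot |\mathcal{P}|\bigr)$ values, while there are $2^n - |\mathcal{F}|$ choices for $S$; so pigeonhole yields a huge collection of $S$'s sharing a fixed template. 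The goal is to exhibit two coherent sets $S_1, S_2$ from this collection that, together with elements already in $\mathcal{F}$, form an induced copy of $\mathcal{P}$, directly contradicting $\mathcal{P}$-freeness of $\mathcal{F}$ before the addition of any new set.

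The main obstacle, and in my view the reason the conjecture is still open, is making this swap argument work in the absence of a ``middle'' antichain of size $\geq 2$ in $\mathcal{P}$. When such an antichain is present, many role-coherent $S$'s compete for the same slot and essentially any two of them can be exchanged inside the witness to produce the forbidden induced copy; this is the mechanism driving this paper's main theorem. For residual posets no such slot exists, and swapping two coherent sets typically introduces an unwanted comparability or incomparability that destroys the induced copy. Overcoming this would likely require a genuinely new ingredient: perhaps an inductive ``peeling'' of extremal elements of $\mathcal{P}$ that preserves saturation, a reduction showing every residual poset is a ``sub-saturation'' of a poset already handled, or a more global description of saturated families that does not rely on a single local swap.
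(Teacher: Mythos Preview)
The statement you are attempting is Conjecture~\ref{conjecturelinear}, and the paper does \emph{not} prove it; it is presented as the central open problem of the area, and the paper establishes only the special cases $\mathcal P_1*\mathcal A_k*\mathcal P_2$ (Theorem~\ref{maintheorem}) and complete multipartite posets (Theorem~\ref{multipartitelinear}). So there is no ``paper's own proof'' to compare against, and any purported full proof would be a major new result.

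Your proposal is, by your own admission, not a proof but a research plan: you explicitly identify the ``residual'' posets (those admitting no $\mathcal P_1*\mathcal A_k*\mathcal P_2$ decomposition with $k\ge 2$) as unresolved and say that handling them ``would likely require a genuinely new ingredient''. That is an honest assessment, but it means the proposal has a genuine gap exactly where the difficulty lies. Two further remarks. First, your description of the mechanism behind Theorem~\ref{maintheorem} as a pigeonhole-plus-swap argument (many $S$'s sharing a witness template, then exchanging two of them inside the antichain slot) does not match what the paper actually does: the proof proceeds via the auxiliary families $\mathcal M,\mathcal N,\mathcal L$, a separation-type lemma (Lemma~\ref{mainprelim}), and the set-pair lemma from~\cite{diamondlinear}; no template-counting or direct swap is used. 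Second, the pigeonhole step you sketch, while numerically valid, is already known not to close the argument even in cases the paper does handle --- having exponentially many $S$ with a common witness does not by itself produce a forbidden copy inside $\mathcal F$, because the two coherent $S$'s you select are not in $\mathcal F$. So the obstacle you flag for residual posets is in fact present everywhere, and the paper's contribution is precisely a route around it in the $\mathcal P_1*\mathcal A_k*\mathcal P_2$ case that does not go through your template argument.
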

As of now, there are very few posets for which the above conjecture has been shown to hold. Some posets for which it is known that the saturation number is linear include $\mathcal V_2$, $\Lambda_2$ \cite{ferrara2017saturation}, the butterfly \cite{ivan2020saturationbutterflyposet, keszegh2021induced}, $\mathcal{V}_3$, $\Lambda_3$ \cite{brownlie2025induced} the antichain \cite{keszegh2021induced, dhankovic2023saturation, bastide2024exact}, and most recently the famous diamond \cite{diamondlinear}.

Building on the work in \cite{diamondlinear}, we show that the linear lower bound for the diamond is a more general phenomenon. More precisely, for two finite posets $\mathcal P_1$ and $\mathcal P_2$, we define $\mathcal P_2*\mathcal P_1$ to be the poset comprised of an induced copy of $\mathcal P_2$ completely above an induced copy of $\mathcal P_1$, i.e all elements of $\mathcal P_2$ are strictly greater than all the elements of $\mathcal P_1$. For completeness, if $\mathcal P_2$ is the empty poset, then we define $\mathcal P_2*\mathcal P_1$ to be $\mathcal P_1$, and if $\mathcal P_1$ is the empty poset we define $\mathcal P_2*\mathcal P_1$ to be $\mathcal P_2$. Trivially, this operation is associative. For example, the diamond poset is in fact $\bullet*\mathcal A_2*\bullet$, where $\mathcal A_2$ is the antichain of size 2 and $\bullet$ is the poset consisting of a single element. In this paper we show that the saturation number of $\mathcal P_1*\mathcal A_k*\mathcal P_2$ is at least linear for any finite posets $\mathcal P_1$ and $\mathcal P_2$ and positive integer $k\geq2$, where $\mathcal A_k$ is the antichain of size $k$, hence generalising the result for the diamond. For a visual aid, the Hasse diagrams of $\mathcal P_1*\mathcal P_2$ and $\mathcal P_1*\mathcal A_k*\mathcal P_2$ are illustrated below.
\begin{figure}[h]

    \centering
    \scalebox{0.75}{
    \begin{tikzpicture}
        \draw (0,0) circle (2cm);
        \draw (0,6) circle (2cm);

        \filldraw[black] (0.25,1) circle (2pt);
        \filldraw[black] (0.75,1) circle (2pt);
        \filldraw[black] (1.25,1) circle (2pt);
        \filldraw[black] (-0.25,1) circle (2pt);
        \filldraw[black] (-0.75,1) circle (2pt);
        \filldraw[black] (-1.25,1) circle (2pt);

        \filldraw[black] (0.5,5) circle (2pt);
        \filldraw[black] (1,5) circle (2pt);
        \filldraw[black] (-0.5,5) circle (2pt);
        \filldraw[black] (-1,5) circle (2pt);

        \draw (-1.25,1) -- (-1,5);
        \draw (-1.25,1) -- (-0.5,5);
        \draw (-1.25,1) -- (0.5,5);
        \draw (-1.25,1) -- (1,5);
        \draw (-0.75,1) -- (-1,5);
        \draw (-0.75,1) -- (-0.5,5);
        \draw (-0.75,1) -- (0.5,5);
        \draw (-0.75,1) -- (1,5);
        \draw (-0.25,1) -- (-1,5);
        \draw (-0.25,1) -- (-0.5,5);
        \draw (-0.25,1) -- (0.5,5);
        \draw (-0.25,1) -- (1,5);
        \draw (1.25,1) -- (-1,5);
        \draw (1.25,1) -- (-0.5,5);
        \draw (1.25,1) -- (0.5,5);
        \draw (1.25,1) -- (1,5);
        \draw (0.75,1) -- (-1,5);
        \draw (0.75,1) -- (-0.5,5);
        \draw (0.75,1) -- (0.5,5);
        \draw (0.75,1) -- (1,5);
        \draw (0.25,1) -- (-1,5);
        \draw (0.25,1) -- (-0.5,5);
        \draw (0.25,1) -- (0.5,5);
        \draw (0.25,1) -- (1,5);

        \node at (0,0) {\LARGE$\mathcal{P}_2$};
        \node at (0,6) {\LARGE$\mathcal{P}_1$};
        \node at (0,-2.5) {\Large$\mathcal{P}_1\ast\mathcal{P}_2$};

        \draw (8,0) circle (2cm);
        \draw (8,6) circle (2cm);

        \filldraw[black] (8.25,1) circle (2pt);
        \filldraw[black] (8.75,1) circle (2pt);
        \filldraw[black] (9.25,1) circle (2pt);
        \filldraw[black] (7.75,1) circle (2pt);
        \filldraw[black] (7.25,1) circle (2pt);
        \filldraw[black] (6.75,1) circle (2pt);

        \filldraw[black] (8.5,5) circle (2pt);
        \filldraw[black] (9,5) circle (2pt);
        \filldraw[black] (7.5,5) circle (2pt);
        \filldraw[black] (7,5) circle (2pt);

        \filldraw[black] (8,3) circle (2pt);
        \filldraw[black] (8.67,3) circle (2pt);
        \filldraw[black] (9.33,3) circle (2pt);
        \filldraw[black] (10,3) circle (2pt);
        \filldraw[black] (7.33,3) circle (2pt);
        \filldraw[black] (6.67,3) circle (2pt);
        \filldraw[black] (6,3) circle (2pt);

        \draw (6.75,1) -- (6,3);
        \draw (6.75,1) -- (6.67,3);
        \draw (6.75,1) -- (7.33,3);
        \draw (6.75,1) -- (8,3);
        \draw (6.75,1) -- (8.67,3);
        \draw (6.75,1) -- (9.33,3);
        \draw (6.75,1) -- (10,3);
        
        \draw (7.25,1) -- (6,3);
        \draw (7.25,1) -- (6.67,3);
        \draw (7.25,1) -- (7.33,3);
        \draw (7.25,1) -- (8,3);
        \draw (7.25,1) -- (8.67,3);
        \draw (7.25,1) -- (9.33,3);
        \draw (7.25,1) -- (10,3);
        
        \draw (7.75,1) -- (6,3);
        \draw (7.75,1) -- (6.67,3);
        \draw (7.75,1) -- (7.33,3);
        \draw (7.75,1) -- (8,3);
        \draw (7.75,1) -- (8.67,3);
        \draw (7.75,1) -- (9.33,3);
        \draw (7.75,1) -- (10,3);

        \draw (8.25,1) -- (6,3);
        \draw (8.25,1) -- (6.67,3);
        \draw (8.25,1) -- (7.33,3);
        \draw (8.25,1) -- (8,3);
        \draw (8.25,1) -- (8.67,3);
        \draw (8.25,1) -- (9.33,3);
        \draw (8.25,1) -- (10,3);

        \draw (8.75,1) -- (6,3);
        \draw (8.75,1) -- (6.67,3);
        \draw (8.75,1) -- (7.33,3);
        \draw (8.75,1) -- (8,3);
        \draw (8.75,1) -- (8.67,3);
        \draw (8.75,1) -- (9.33,3);
        \draw (8.75,1) -- (10,3);

        \draw (9.25,1) -- (6,3);
        \draw (9.25,1) -- (6.67,3);
        \draw (9.25,1) -- (7.33,3);
        \draw (9.25,1) -- (8,3);
        \draw (9.25,1) -- (8.67,3);
        \draw (9.25,1) -- (9.33,3);
        \draw (9.25,1) -- (10,3);

        \draw (7,5) -- (6,3);
        \draw (7,5) -- (6.67,3);
        \draw (7,5) -- (7.33,3);
        \draw (7,5) -- (8,3);
        \draw (7,5) -- (8.67,3);
        \draw (7,5) -- (9.33,3);
        \draw (7,5) -- (10,3);

        \draw (7.5,5) -- (6,3);
        \draw (7.5,5) -- (6.67,3);
        \draw (7.5,5) -- (7.33,3);
        \draw (7.5,5) -- (8,3);
        \draw (7.5,5) -- (8.67,3);
        \draw (7.5,5) -- (9.33,3);
        \draw (7.5,5) -- (10,3);

        \draw (8.5,5) -- (6,3);
        \draw (8.5,5) -- (6.67,3);
        \draw (8.5,5) -- (7.33,3);
        \draw (8.5,5) -- (8,3);
        \draw (8.5,5) -- (8.67,3);
        \draw (8.5,5) -- (9.33,3);
        \draw (8.5,5) -- (10,3);

        \draw (9,5) -- (6,3);
        \draw (9,5) -- (6.67,3);
        \draw (9,5) -- (7.33,3);
        \draw (9,5) -- (8,3);
        \draw (9,5) -- (8.67,3);
        \draw (9,5) -- (9.33,3);
        \draw (9,5) -- (10,3);

        \node at (8,0) {\LARGE$\mathcal{P}_2$};
        \node at (8,6) {\LARGE$\mathcal{P}_1$};
        \node at (10.75,3) {\LARGE$\mathcal{A}_k$};
        \node at (8,-2.5) {\Large$\mathcal{P}_1\ast\mathcal{A}_k\ast\mathcal{P}_2$};
        
    \end{tikzpicture}}
\end{figure}

The plan of the paper is as follows. In Section 2 we reduce the problem to proving linearity in the case where the top poset has a unique minimal element, and the bottom poset has a unique maximal element -- this will make the analysis to come more fluid. In Sections 3 and 4 we establish the preliminary lemmas which will enable us to apply the set-theoretic result about pairs of families of sets developed in \cite{diamondlinear} (Lemma 6), with a necessary straightforward modification, and hence prove our main result in Section 5. In Section 6 we focus on the complete multipartite posets. We show that, with the exception of a chain, which was shown in \cite{gerbner2013saturating} to have bounded saturation number, the saturation number for all multipartite posets is linear. Finally, in Section 7 we discuss briefly what happens if one replaces the antichain in the middle with a poset that has the UCTP. This is a natural question since it was shown in \cite{ferrara2017saturation} that a poset with UCTP has unbounded saturation number. However, linearity has not been established for this class of posets. 

In a poset we say that $y$ covers $x$ if $x<y$ and there is no $z$ with $x<z<y$. We say that a poset has the UCTP if, for any two elements $p_1$ and $p_2$ of the poset, if $p_2$ covers $p_1$, then there exists $p_3$, called their \textit{twin}, that is comparable with exactly one of $p_1$ and $p_2$. We mention that in the literature there is some variation when it comes to the definitions of the UCTP and that of the twin. They are equivalent as far as the results are concerned. In Section 7 we give an outline of the proof that the saturation number of $\mathcal P_1*\mathcal Q*\mathcal P_2$ is unbounded, where $\mathcal P_1$ and $\mathcal P_2$ are two finite posets, and $\mathcal Q$ is a poset with the UCTP of size at least $2$.

Throughout the paper our notation is standard. For a finite set $S$, we denote by $\binom{S}{k}$ the collection of all subsets of $S$ of size $k$. When $S=[n]$ this is also known as the $k^{\text{th}}$ layer of $Q_n$. We also denote by $\binom{S}{\leq k}$ the collection of all subsets of $S$ of size at most $k$.
\section{The setup}
Let $\mathcal P_1$ and $\mathcal P_2$ be two posets. Our goal is to show that, given any integer $k\geq2$, the saturation number of $\mathcal P_1*\mathcal A_k*\mathcal P_2$ is at least linear.

We start with a result proven in \cite{gluing}.
\begin{lemma}[Proposition 3 in \cite{gluing}]
\label{originalguinglemma}
Let $\mathcal Q_1$ and $\mathcal Q_2$ be non-empty posets such that $\mathcal Q_1$ does not have a unique minimal element and $\mathcal Q_2$ does not have a unique maximal element. Let $\mathcal F$ be a $\mathcal Q_1*\mathcal Q_2$-saturated family with ground set $[n]$. Then $\mathcal F$ is also a $\mathcal Q_1*\bullet*\mathcal Q_2$-saturated family with ground set $[n]$, where $\bullet$ represents the poset with a single element.
\end{lemma}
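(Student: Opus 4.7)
I plan to verify the two halves of the saturation condition separately. The first half---that $\mathcal F$ contains no induced copy of $\mathcal Q_1*\bullet*\mathcal Q_2$---is immediate: deleting the middle point yields $\mathcal Q_1*\mathcal Q_2$ as an induced subposet of $\mathcal Q_1*\bullet*\mathcal Q_2$, so any induced copy of the latter inside $\mathcal F$ would contradict the $\mathcal Q_1*\mathcal Q_2$-saturation of $\mathcal F$.

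The substantive task is the second half: for any $S\notin\mathcal F$, produce an induced copy of $\mathcal Q_1*\bullet*\mathcal Q_2$ in $\mathcal F\cup\{S\}$. By $\mathcal Q_1*\mathcal Q_2$-saturation, $\mathcal F\cup\{S\}$ contains an induced copy $\{F_q:q\in\mathcal Q_1\}\cup\{G_r:r\in\mathcal Q_2\}$ of $\mathcal Q_1*\mathcal Q_2$. Set
\[
L=\bigcup_{r\in\mathcal Q_2}G_r,\qquad U=\bigcap_{q\in\mathcal Q_1}F_q.
\]
The absence of a unique minimum in $\mathcal Q_1$ supplies, for each $q$, some $q'\in\mathcal Q_1$ with $F_{q'}\not\supseteq F_q$, whence $U\subseteq F_q\cap F_{q'}\subsetneq F_q$; dually, the absence of a unique maximum in $\mathcal Q_2$ gives $L\supsetneq G_r$ for every $r$. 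Since $L\subseteq U$, every set in the interval $[L,U]$ sits strictly between every $G_r$ and every $F_q$ and can therefore serve as the missing middle. Hence if any member of $\mathcal F\cup\{S\}$ lies in $[L,U]$, adjoining it to the existing induced copy produces the desired induced copy of $\mathcal Q_1*\bullet*\mathcal Q_2$.

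Suppose instead that no such middle is present. Then $L\notin\mathcal F$, and by saturation $\mathcal F\cup\{L\}$ contains an induced copy of $\mathcal Q_1*\mathcal Q_2$ which must use $L$. I would then split according to whether $L$ is in the top or the bottom of this new copy. In the top case, the new bottom sets lie strictly inside $L$ and hence strictly inside every $F_q$, so splicing them with the original top $\{F_q\}$ yields a fresh induced $\mathcal Q_1*\mathcal Q_2$ copy in $\mathcal F\cup\{S\}$ whose associated $L$ is contained in the old one. The bottom case is dual and works with $U$ in place of $L$. Running both halves in parallel, under an extremal choice of the initial copy, should force either some middle to lie in $\mathcal F\cup\{S\}$ after all, or else a strict reduction of the extremal statistic, giving the required contradiction.

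The main obstacle is making the descent \emph{strict}: two different induced copies of $\mathcal Q_2$ consisting of sets strictly inside $L$ can still union back up to $L$, so minimising $|L|$ alone is not enough to produce a contradiction. I would expect the fix to involve choosing the initial copy extremally with respect to the pair $(|L|,-|U|)$ (or an analogous joint quantity) and exploiting the $L$- and $U$-side dichotomies simultaneously, in the same spirit as the two-sided bookkeeping used in the diamond argument of \cite{diamondlinear}.
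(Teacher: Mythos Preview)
The paper does not actually prove this lemma; it is quoted as Proposition~3 of \cite{gluing} and used as a black box, so there is no in-paper argument to compare against. Evaluating your sketch on its own merits:

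Your setup is right and the plan (find a middle element in $[L,U]$, otherwise add $L$ and descend) is the correct one. But the ``main obstacle'' you flag is not real. In your top case, where $L$ plays the role of some $F_{q_0}'$ in the new copy inside $\mathcal F\cup\{L\}$, the new bottom $\{G_r'\}$ does not merely sit strictly inside $L$: it sits inside $U'=\bigcap_q F_q'$, and since $\mathcal Q_1$ has no unique minimal element you have $U'\subsetneq F_{q_0}'=L$. Hence the spliced copy $\{F_q\}\cup\{G_r'\}$ has $L'=\bigcup_r G_r'\subseteq U'\subsetneq L$, and the descent in $|L|$ is automatically strict. Minimising $|L|$ alone suffices; no joint $(|L|,-|U|)$ bookkeeping is needed.

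What is genuinely incomplete is your ``bottom case is dual'' line. The clean fix is to first split on where $S$ lies in the original copy. If $S$ is in the top part, then the original bottom $\{G_r\}$ lies entirely in $\mathcal F$; now when $L$ lands in the \emph{bottom} of the copy in $\mathcal F\cup\{L\}$, the new top $\{F_q'\}$ is also entirely in $\mathcal F$ and satisfies $G_r\subsetneq L\subsetneq F_q'$, so $\{F_q'\}\cup\{G_r\}$ is an induced copy of $\mathcal Q_1*\mathcal Q_2$ inside $\mathcal F$ itself---an immediate contradiction, no descent required. If instead $S$ is in the bottom part, run the genuinely dual argument with $U$ in place of $L$ (maximise $|U|$, add $U$, and note that in the analogous ``bottom'' subcase the ascent $U''\supsetneq U$ is strict by the same mechanism). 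With this one structural clarification your argument is complete.
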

This gives us the following corollary. 
\begin{corollary}\label{poset-point-poset-corollary}
Let $\mathcal Q_1$ and $\mathcal Q_2$ be two posets, not both empty, such that $\mathcal Q_1$ does not have a unique minimal element and $\mathcal Q_2$ does not have a unique maximal element. Let $\mathcal F$ be a $\mathcal Q_1*\mathcal Q_2$-saturated family with ground set $[n]$. Then $\mathcal F$ is also a $\mathcal Q_1*\bullet*\mathcal Q_2$-saturated family with ground set $[n]$, where $\bullet$ represents the poset with a single element.
\end{corollary}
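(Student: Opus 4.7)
My plan is to pick out the one case genuinely new relative to Lemma \ref{originalguinglemma} and prove it directly. Lemma \ref{originalguinglemma} already yields the conclusion whenever both $\mathcal Q_1$ and $\mathcal Q_2$ are non-empty, so the only additional scenarios are $\mathcal Q_1=\emptyset$ with $\mathcal Q_2$ a non-empty poset without a unique maximum, and the symmetric situation with $\mathcal Q_2=\emptyset$. Because the complementation map $\mathcal F\mapsto\{[n]\setminus F:F\in\mathcal F\}$ turns $\mathcal R$-saturation into $\mathcal R^{\mathrm{op}}$-saturation, swaps the roles of minima and maxima, and reverses $*$, the $\mathcal Q_2=\emptyset$ case follows from the $\mathcal Q_1=\emptyset$ case by duality. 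I therefore reduce to showing: if $\mathcal F$ is $\mathcal Q_2$-saturated and $\mathcal Q_2$ is a non-empty poset with no unique maximum, then $\mathcal F$ is $\bullet*\mathcal Q_2$-saturated.

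The "no induced copy" direction is immediate, since any copy of $\bullet*\mathcal Q_2$ contains a copy of $\mathcal Q_2$. The essential step of the argument, and where the hypothesis on $\mathcal Q_2$ is used, is the observation that one must have $[n]\in\mathcal F$. Indeed, if $[n]\notin\mathcal F$ then saturation produces an induced copy $\mathcal C$ of $\mathcal Q_2$ in $\mathcal F\cup\{[n]\}$ that uses $[n]$; because $[n]$ is a strict superset of every other subset of $[n]$, any element of $\mathcal Q_2$ mapped to $[n]$ in an order-preserving embedding would have to be strictly above every other element of $\mathcal Q_2$, i.e. a unique maximum, contradicting our hypothesis.

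Once $[n]\in\mathcal F$ is in hand, the rest is straightforward. For any $S\notin\mathcal F$ we have $S\neq[n]$, and saturation provides an induced copy $\mathcal C$ of $\mathcal Q_2$ inside $\mathcal F\cup\{S\}$ which must use $S$. Re-running the previous argument, $[n]\notin\mathcal C$, so every element of $\mathcal C$ is a proper subset of $[n]$. Hence $\mathcal C\cup\{[n]\}$ is an induced copy of $\bullet*\mathcal Q_2$ inside $\mathcal F\cup\{S\}$, which is exactly what is required.

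The main obstacle to watch for is forcing $[n]\in\mathcal F$ from the absence of a unique maximum in $\mathcal Q_2$; once this is spotted, $[n]$ itself plays the role of the "extra point" needed to upgrade $\mathcal Q_2$ to $\bullet*\mathcal Q_2$, and the proof finishes with no further work. The duality step is routine bookkeeping and should not cause any trouble.
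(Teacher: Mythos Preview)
Your proof is correct and essentially identical to the paper's: both reduce to the case $\mathcal Q_1=\emptyset$ via symmetry/duality, observe that $[n]\in\mathcal F$ because $\mathcal Q_2$ has no unique maximum, and then append $[n]$ to any induced copy of $\mathcal Q_2$ created by a new set $S$ to obtain the required copy of $\bullet*\mathcal Q_2$.
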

\begin{proof}
By symmetry, we may assume without loss of generality that $\mathcal Q_2$ is not empty. If $\mathcal Q_1$ is not empty, then we are done by Lemma~\ref{originalguinglemma}. Hence, we may assume that $\mathcal Q_1$ is empty, which means that $\mathcal F$ is $\mathcal Q_2$-saturated. Since $\mathcal Q_2$ does not have a unique maximal element, the full set $[n]$ cannot be part of any induced copy of $\mathcal Q_2$, and so $[n]\in \mathcal F$.

Since $\mathcal F$ does not contain a copy if $\mathcal Q_2$, it also does not contain a copy of $\bullet*\mathcal Q_2$. Moreover, if $X\in \mathcal P([n])\setminus \mathcal F$, then $\mathcal F\cup \{X\}$ contains a copy of $\mathcal Q_2$. By combining this copy with $[n]$, we obtain a copy of $\bullet * \mathcal Q_2$ in $\mathcal F\cup \{X\}$. Therefore, $\mathcal F$ is $\bullet * \mathcal Q_2$-saturated, as desired.
\end{proof}

Let $\mathcal P_1'$ be $\mathcal P_1$, if $\mathcal P_1$ has a unique minimal element, and $\mathcal P_1*\bullet$ otherwise. Similarly, let $\mathcal P_2'$ be $\mathcal P_2$, if $\mathcal P_2$ has a unique maximal element, and $\bullet*\mathcal P_2$ otherwise. After two straightforward applications of Corollary~\ref{poset-point-poset-corollary}, we get the following.

\begin{corollary}\label{assuminguniquemaximal}
Let $\mathcal P_1$ and $\mathcal P_2$ be two posets, and $k\geq2$. Let $\mathcal P_1'$ and $\mathcal P_2'$ be as defined above. Then $\text{sat}^*(n,\mathcal P_1* \mathcal A_k*\mathcal P_2)\geq\text{sat}^*(n,\mathcal P_1'* \mathcal A_k*\mathcal P_2')$.
\end{corollary}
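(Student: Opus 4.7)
The plan is to prove the stronger claim that \emph{every} $\mathcal P_1 * \mathcal A_k * \mathcal P_2$-saturated family $\mathcal F$ with ground set $[n]$ is automatically $\mathcal P_1' * \mathcal A_k * \mathcal P_2'$-saturated; the claimed inequality on saturation numbers then follows immediately (in fact with equality). I would accomplish this by applying Corollary~\ref{poset-point-poset-corollary} up to two times, each application inserting a single point $\bullet$ adjacent to the middle antichain, and using associativity of $*$ to rebracket freely.

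For the first application, I split the target poset as $\mathcal P_1 * (\mathcal A_k * \mathcal P_2)$ and take $\mathcal Q_1 = \mathcal P_1$, $\mathcal Q_2 = \mathcal A_k * \mathcal P_2$. The hypothesis that $\mathcal Q_2$ has no unique maximal element is guaranteed by $k\geq 2$, since the maximal elements of $\mathcal A_k * \mathcal P_2$ are precisely the $k$ elements of $\mathcal A_k$. If $\mathcal P_1$ does not have a unique minimal element, Corollary~\ref{poset-point-poset-corollary} then upgrades $\mathcal F$ to a $(\mathcal P_1 * \bullet) * \mathcal A_k * \mathcal P_2$-saturated family; if $\mathcal P_1$ already has a unique minimal element, this step is skipped and $\mathcal P_1' = \mathcal P_1$. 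Either way, $\mathcal F$ is $\mathcal P_1' * \mathcal A_k * \mathcal P_2$-saturated.

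For the second application, symmetrically, I rebracket as $(\mathcal P_1' * \mathcal A_k) * \mathcal P_2$ and set $\mathcal Q_1 = \mathcal P_1' * \mathcal A_k$, $\mathcal Q_2 = \mathcal P_2$. Now $\mathcal Q_1$ has exactly the $k\geq 2$ elements of $\mathcal A_k$ as its minimal elements, so the hypothesis on $\mathcal Q_1$ is satisfied. If $\mathcal P_2$ does not have a unique maximum, Corollary~\ref{poset-point-poset-corollary} inserts a $\bullet$ above $\mathcal P_2$, yielding that $\mathcal F$ is $\mathcal P_1' * \mathcal A_k * (\bullet * \mathcal P_2) = \mathcal P_1' * \mathcal A_k * \mathcal P_2'$-saturated; otherwise $\mathcal P_2' = \mathcal P_2$ and we are already done.

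There is no real obstacle here; the proof is essentially bookkeeping. The only points to verify carefully are (i) that $\mathcal A_k$ supplies the non-unique extremum required of $\mathcal Q_2$ (respectively $\mathcal Q_1$) in each application, which needs $k\geq 2$, and (ii) the edge case when $\mathcal P_1$ or $\mathcal P_2$ is empty. The latter is handled because Corollary~\ref{poset-point-poset-corollary} explicitly permits one of its two posets to be empty (its ``not both empty'' condition is safe since $\mathcal A_k$ is nonempty), and the convention $\emptyset * \mathcal R = \mathcal R$ makes the empty poset ``fail'' to have a unique minimum or maximum, so that $\mathcal P_1' = \bullet$ or $\mathcal P_2' = \bullet$ in those boundary cases, exactly as the definition of $\mathcal P_1', \mathcal P_2'$ requires.
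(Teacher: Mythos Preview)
Your proof is correct and follows exactly the approach the paper indicates: two applications of Corollary~\ref{poset-point-poset-corollary}, using associativity of $*$ and the fact that $k\geq 2$ makes $\mathcal A_k$ supply the needed non-unique extremum on the side adjacent to it. (Your parenthetical ``in fact with equality'' is an extra claim not needed for the corollary and not established by this argument, but it does not affect the proof of the stated inequality.)
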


This tells us that, in order to show that the saturation number of $\mathcal P_1*\mathcal A_k*\mathcal P_2$ is at least linear, it is enough to show that the saturation number of $\mathcal P_1'*\mathcal A_k*\mathcal P_2'$ is at least linear. Therefore, we may assume from now on that $\mathcal P_1$ has a unique minimal element, and $\mathcal P_2$ has a unique maximal element. 

\section{Preliminary lemmas}

Let us denote by $\widehat{\mathcal P} = \mathcal P_1* \mathcal A_k*\mathcal P_2$, and let $\mathcal F$ be a $\widehat{\mathcal P}$-saturated family.

Let $\mathcal M_0$ be the set of elements in $\mathcal F$ that are above a copy of $\mathcal P_2$, potentially equal to the unique maximal element of $\mathcal P_2$. More precisely, $\mathcal M_0 = \{A\in \mathcal F : \mathcal P(A)\cap \mathcal F \text{ contains an induced copy of }\mathcal P_2\}$. Let $\mathcal M$ be the set of minimal elements of $\mathcal M_0$. 

Similarly, we define $\mathcal N_0 = \{A\in \mathcal F: \{X\in \mathcal F: A\subseteq X\} \text{ contains an induced copy of } \mathcal P_1\}$, and $\mathcal N$ the set of maximal elements of $\mathcal N_0$.

The first helpful observation is the following lemma.
\begin{lemma}\label{firstprelim}
Suppose that $|\mathcal F|<n$, and let $i\in [n]$. Then $i\in M$ for some $M\in\mathcal M$, or there exists $Y\in \mathcal F$ with $i\notin Y$ such that $M\subsetneq Y\subsetneq N$ for some $M\in \mathcal M$ and $N\in\mathcal N$, or there exists a set $S$ such that $i\notin S$ and $S,S\cup \{i\}\in\mathcal F$.
\end{lemma}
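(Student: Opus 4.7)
I would argue by contradiction: assume that for the fixed $i$ none of (a), (b), (c) holds, and derive a contradiction. The negation of (c) is equivalent to saying the map $A\mapsto A\setminus\{i\}$ is injective on $\mathcal F$, so for every $A\in\mathcal F$ one has $A\triangle\{i\}\notin\mathcal F$. Combined with the negation of (a), this gives $M\cup\{i\}\notin\mathcal F$ for every $M\in\mathcal M$.

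The main step is to pick any $M\in\mathcal M$ and apply saturation to the set $M\cup\{i\}$: the family $\mathcal F\cup\{M\cup\{i\}\}$ contains an induced copy $\mathcal C$ of $\widehat{\mathcal P}=\mathcal P_1\ast\mathcal A_k\ast\mathcal P_2$ using $M\cup\{i\}$. Write $B^*$ for the top of the $\mathcal P_2$-part of $\mathcal C$, $A^*_1,\dots,A^*_k$ for the middle antichain, and $C^*$ for the bottom of the $\mathcal P_1$-part. The key structural input is that $M$ already carries a full induced $\mathcal P_2$-copy below it in $\mathcal F$ (from $M\in\mathcal M$) and that $\mathcal P_2$ has a unique maximum (by Section~2). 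A substitution argument shows that $M\cup\{i\}$ cannot lie in the bottom $\mathcal P_2$-part of $\mathcal C$: in each sub-case, splicing the pre-existing $\mathcal P_2$-copy beneath $M$ in place of the $\mathcal P_2$-copy of $\mathcal C$ (with $M$ replacing $M\cup\{i\}$) produces an induced copy of $\widehat{\mathcal P}$ entirely inside $\mathcal F$, contradicting saturation. Hence $M\cup\{i\}$ must lie in the middle antichain or in the top $\mathcal P_1$-part of $\mathcal C$. In either case, since $k\geq 2$, there is at least one antichain set $Y=A^*_j\in\mathcal F$ distinct from $M\cup\{i\}$. By construction $B^*\subsetneq Y\subsetneq C^*$. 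The set $B^*$ lies in $\mathcal M_0$ (it has a full induced $\mathcal P_2$-copy below it in $\mathcal F$), so $B^*\supseteq M'$ for some $M'\in\mathcal M$; similarly, via the $\mathcal P_1$-part of $\mathcal C$ one extracts some $N\in\mathcal N$ with $C^*\subseteq N$ (in the sub-case $M\cup\{i\}=C^*$, one uses the unique minimum of $\mathcal P_1$ to find a surrogate element of $\mathcal F$ from the $\mathcal P_1$-part, which still lies in $\mathcal N_0$). Finally, the pairwise incomparability of $Y$ with $M\cup\{i\}$ inside the antichain, combined with $Y\supsetneq B^*$ and the failure of (c), is used to force $i\notin Y$: otherwise $Y$ would contain $B^*\cup\{i\}$, and a comparison with $M\cup\{i\}$ across the different sub-cases would yield either $Y\supseteq M\cup\{i\}$ (breaking incomparability) or a forbidden flipping pair in $\mathcal F$. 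This gives the sandwich $M'\subsetneq Y\subsetneq N$ with $Y\in\mathcal F$ and $i\notin Y$, contradicting the failure of (b).

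The main obstacle is the bookkeeping of the sub-cases for where $M\cup\{i\}$ sits inside $\mathcal C$ (the top of the $\mathcal P_2$-part versus an internal $\mathcal P_2$-element, an antichain element, or the bottom/internal/top of the $\mathcal P_1$-part) and verifying in each case that an antichain $Y\in\mathcal F$ with $i\notin Y$ can be extracted. The corner cases $\mathcal M=\emptyset$ or $\mathcal N=\emptyset$ also require separate treatment, as does the degenerate sub-case $M\cup\{i\}=C^*$. The hypotheses $k\geq 2$ and $|\mathcal F|<n$ both enter precisely at the final extraction step, guaranteeing enough flexibility in the antichain and enough scarcity of flipping pairs with respect to $i$ to rule out the remaining degenerate configurations.
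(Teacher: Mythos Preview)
Your overall strategy matches the paper's: pick $M\in\mathcal M$, apply saturation to $M\cup\{i\}$, and do a case analysis on where $M\cup\{i\}$ lands in the resulting copy $\mathcal C$ of $\widehat{\mathcal P}$. The splicing argument ruling out the $\mathcal P_2$-part is the same. But there is a genuine gap in your treatment of the case where $M\cup\{i\}$ lies in the $\mathcal P_1$-part of $\mathcal C$. You try to extract some $N\in\mathcal N$ with $C^*\subseteq N$, but the $\mathcal P_1$-part of $\mathcal C$ contains $M\cup\{i\}\notin\mathcal F$, so neither $C^*$ nor any ``surrogate'' element of that part is witnessed to lie in $\mathcal N_0$: the only $\mathcal P_1$-copy at hand uses $M\cup\{i\}$. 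If $\mathcal P_1=\bullet$ there is no surrogate at all. Your argument that $i\notin Y$ is likewise incomplete. In the $\mathcal P_1$ case $Y\subsetneq M\cup\{i\}$, so the incomparability you invoke is simply unavailable. Even in the antichain case, deducing $M\cup\{i\}\subseteq Y$ from $B^*\cup\{i\}\subseteq Y$ would require $M\subseteq B^*$, which you have not shown --- your $M'\in\mathcal M$ below $B^*$ need not be $M$, and the vague ``forbidden flipping pair'' does not materialise from the data.

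The paper closes both gaps with a single device you omit: it chooses $A\in\mathcal M$ of \emph{minimal cardinality}. In the $\mathcal P_1$ case, the top $X$ of the $\mathcal P_2$-part of $\mathcal C$ then satisfies $|X|<|A|$ (there is an antichain element strictly between them) and $X\in\mathcal M_0$, producing a strictly smaller element of $\mathcal M$ --- an immediate contradiction, with no need to locate any $N$. In the antichain case, minimality forces the element $X^*\in\mathcal M$ below $B^*$ to satisfy $|X^*|=|A|$, whence either $i\in X^*$ (giving (a)) or $X^*=A$; in the latter situation $A\subsetneq T_1$ and the incomparability of $T_1$ with $A\cup\{i\}$ give $i\notin T_1$ directly. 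Incidentally, the hypothesis $|\mathcal F|<n$ is not used at the extraction step as you say, but right at the start, to guarantee $\mathcal M\neq\emptyset$ via some $[n]\setminus\{j\}\notin\mathcal F$.
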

\begin{proof}
Since $|\mathcal F|<n$, there exists $j\in [n]$ such that $[n]\setminus \{j\}\notin \mathcal F$. This means that $\mathcal F\cup \{[n]\setminus\{j\}\}$ contains a copy of $\widehat{\mathcal P}$, which must contain $[n]\setminus\{j\}$. Since $[n]$ is the only element of $\mathcal P([n])$ greater than $[n]\setminus \{j\}$, there cannot be an antichain of size $k$ above $[n]\setminus\{j\}$. This means that $[n]\setminus\{j\}$ cannot be in the $\mathcal P_2$ part of the copy of $\widehat{\mathcal P}$. Hence, there is a copy of $\mathcal P_2$ in $\mathcal F$ that is below $[n]\setminus\{j\}$. Let $A_0$ be the maximal element of such a copy. This means that $A_0\in \mathcal M_0$. Thus $\mathcal M_0$ is not empty, and so $\mathcal M$ is not empty. 

Let now $A\in \mathcal M$ with $|A|$ minimal. If $i\in A$, then we are done, so we may assume that $i\notin A$. If $A\cup\{i\}\in\mathcal F$, then we are also done, so we may also assume that $A\cup \{i\}\notin\mathcal F$. This means that $\mathcal F\cup \{A\cup \{i\}\}$ contains an induced copy of $\widehat{\mathcal P}$, which must contain $A\cup \{i\}$. Fix such a copy, which we refer to as $\widehat{\mathcal P}$ for readability purposes.
\begin{case1}
Suppose that $A\cup \{i\}$ is in the $\mathcal P_2$ part of this copy of $\widehat{\mathcal P}$.
\end{case1}
\noindent Since $A\in \mathcal M$, there exists a copy of $\mathcal P_2$ in $\mathcal F\cap \mathcal P(A)$. This copy must be strictly below $A\cup \{i\}$. However, we assumed that $A\cup\{i\}$ is in the $\mathcal P_2$ part of our copy of $\widehat{\mathcal P}$, which means that it is below a copy of $\mathcal P_1*\mathcal A_k$ in $\mathcal F$. Combining these two observations, we obtain a copy of $\mathcal P_1*\mathcal A_k*\mathcal P_2$ in $\mathcal F$, a contradiction.
\begin{case1}
Suppose that $A\cup \{i\}$ is in the $\mathcal P_1$ part of this copy of $\widehat{\mathcal P}$.
\end{case1}
\noindent Let $X$ be the unique maximal element of the $\mathcal P_2$ part of our copy of $\widehat{\mathcal P}$, and let $T$ be a member of its $\mathcal A_k$ part. By assumption, we have that $X\subsetneq T\subsetneq A\cup \{i\}$, and so $|X|<|T|<|A\cup \{i\}|$, which consequently implies that $|X|<|A|$.
    
However, by construction, the $\mathcal P_2$ part is contained in $\mathcal P(X)\cap \mathcal F$, which implies that $X\in \mathcal M_0$. Thus, there exists $Y\in\mathcal M$ such that $Y\subseteq X$. But then $Y\in\mathcal M$ and $|Y|\leq|X|<|A|$, which contradicts the minimality of $|A|$.

\begin{case1}
Suppose that $A\cup \{i\}$ is in the $\mathcal A_k$ part of this copy of $\widehat{\mathcal P}$.
\end{case1}
\noindent Let $X$ be the unique maximal element of the $\mathcal P_2$ part, $Y$ the unique minimal element of the $\mathcal P_1$ part, and $\{T_1,T_2,\dots,T_{k-1}\}$ the $\mathcal A_k$ part, without $A\cup\{i\}$. Since $X$ is above the $\mathcal P_2$ part, we have that $X\in \mathcal M_0$. Thus, there exists $X^*\in\mathcal M$ such that $X^*\subseteq X$. Similarly, we have $Y^*\in \mathcal N$ such that $Y\subseteq Y^*$. We then have the following diagram.
\begin{figure}[h]
\hspace{0.1cm}\\
\centering
\begin{tikzpicture}[node distance=1.5cm and 2cm, baseline=(mid.base)]
\node[label=left:$A\cup\{i\}$](u1){$\bullet$};
\node[right=of u1,label=left:$T_1$](u2){$\bullet$};
\node[right=of u2,label=right:$T_2$](u3) {$\bullet$};
\node[font=\large,right=of u3](mid){$\cdots$};
\node[right=of mid,label=right:$T_{k-1}$] (ukm1) {\(\bullet\)};
\node[above=of u3, label= above:$Y^*$] (Y) {$\bullet$};
\node[below=of u3,label=below:$X^*$] (x) {$\bullet$};
\foreach \v in {u1,u2,ukm1,u3,mid}{\draw (x) -- (\v) -- (Y);}
\end{tikzpicture}
\end{figure}
\FloatBarrier
Since $X^*\in \mathcal M$, by the minimality of $|A|$, we have that $|A|\leq|X^*|$. On the other hand, since $X^*\subsetneq A\cup \{i\}$, we have that $|X^*|\leq |A|$, which implies that $|X^*| = |A|$. 

If $X^*\neq A$, then $i\in X^*$, and we would be done as $X^*\in\mathcal M$. On the other hand, if $X^* = A$, then $A\subset T_1$ and $T_1$ and $A\cup \{i\}$ are incomparable, which means that $i\notin T_1$. Since $X^*\subsetneq T_1\subsetneq Y^*$, $X^*\in\mathcal M$, and $Y^*\in\mathcal N$, we are done since we satisfied the second condition of the statement.
\end{proof}

The next lemma is a refinement of Lemma~\ref{firstprelim}, and indeed, its proof relies on Lemma~\ref{firstprelim}. This is an intermediary step in order to reach Lemma~\ref{mainprelim}, which is the main result of this section. 

\begin{lemma}\label{secondprelim}
Suppose that $|\mathcal F|<n$, and let $i\in [n]$. Then $i\in M$ for some $M\in \mathcal M$, or $i\notin N$ for some $N\in \mathcal N$, or there exists a set $S$ such that $i\notin S$ and $S,S\cup \{i\}\in \mathcal F$.\end{lemma}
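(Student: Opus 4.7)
The plan is to invoke Lemma~\ref{firstprelim} and handle its middle alternative. If that lemma returns its first or third alternative, those immediately yield the corresponding conclusions here, so I may assume the middle alternative: there exist $Y\in\mathcal F$ with $i\notin Y$, and $M\in\mathcal M$, $N\in\mathcal N$ with $M\subsetneq Y\subsetneq N$. I would then replace $N$ by an element $N^\ast$ of $\mathcal N$ containing $Y$ of \emph{maximum} size. If $i\notin N^\ast$ we are done; if $N^\ast\setminus\{i\}\in\mathcal F$, we are done with $S=N^\ast\setminus\{i\}$; so assume $i\in N^\ast$ and $N^\ast\setminus\{i\}\notin\mathcal F$.

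By saturation, $\mathcal F\cup\{N^\ast\setminus\{i\}\}$ contains an induced copy of $\widehat{\mathcal P}$ using $N^\ast\setminus\{i\}$, and I would case-split on the role of this set in the copy. If $N^\ast\setminus\{i\}$ is in the $\mathcal P_2$-part, the $\mathcal P_1\ast\mathcal A_k$-part of the copy sits strictly above it in $\mathcal F$, and splicing in the $\mathcal P_2$-witness below $M$ (which sits strictly below $N^\ast\setminus\{i\}$ because $M\subsetneq Y\subseteq N^\ast\setminus\{i\}$) produces an induced copy of $\widehat{\mathcal P}$ in $\mathcal F$, contradicting saturation. If $N^\ast\setminus\{i\}$ is in the $\mathcal P_1$-part, the argument is symmetric, using the $\mathcal P_1$-witness above $N^\ast$, which sits strictly above $N^\ast\setminus\{i\}\subsetneq N^\ast$.

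The remaining case is when $N^\ast\setminus\{i\}$ is in the $\mathcal A_k$-part of the copy; let $L$ be the minimum of the $\mathcal P_1$-part. Then $L\in\mathcal F$, $L\in\mathcal N_0$, and $L\supsetneq N^\ast\setminus\{i\}\supseteq Y$. If $i\notin L$, I would extend $L$ to some $L^\ast\in\mathcal N$; then $L^\ast\supseteq Y$, and either $i\notin L^\ast$ (giving the second conclusion directly), or $i\in L^\ast$ with $|L^\ast|>|N^\ast|$ (a short size calculation rules out $|L^\ast|=|N^\ast|$ when $i\notin L$), contradicting maximality of $|N^\ast|$. If instead $i\in L$, then $L\supseteq N^\ast\setminus\{i\}\cup\{i\}=N^\ast$, and maximality of $N^\ast$ in $\mathcal N_0$ forces $L=N^\ast$; the $k-1\ge 1$ other antichain elements $T_1,\dots,T_{k-1}\in\mathcal F$ of the copy then lie strictly below $N^\ast$, and incomparability with $N^\ast\setminus\{i\}$ combined with $T_j\subsetneq N^\ast$ forces $i\in T_j$ for each $j$. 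If any $T_j\setminus\{i\}\in\mathcal F$ we obtain (c'); otherwise I would recurse on each $T_j\setminus\{i\}$ by the same saturation-plus-case-split argument.

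The main obstacle is the recursion in this final sub-case. At each recursive level the plugged-in set shrinks strictly in size (since $T_j\subsetneq N^\ast$), so the process must terminate in a splicing contradiction or in a direct witness of (c'); termination here relies crucially on $k\ge 2$, so that there is at least one $T_j$ on which to recurse. The delicate technical point is ensuring, at each recursion level, that a $\mathcal P_2$-witness in $\mathcal F$ strictly below the current plugged-in set remains available for the Case~(I) splicing --- either the fixed $\mathcal P_2$-copy below $M$ (when $M$ is contained in the current plugged-in set), or the $\mathcal P_2$-maximum of the current copy when that maximum differs from the plugged-in set itself. Carefully tracking these witnesses and the incomparability relations needed to splice induced copies of $\widehat{\mathcal P}$ at each level is where I expect the bulk of the technical work to lie.
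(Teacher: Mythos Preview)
Your first-level analysis is correct, but the recursion you propose for the sub-case $i\in L$ cannot be completed. The identity $L=N^\ast$ at level~1 is what makes everything work there: it follows from $N^\ast$ being maximal in $\mathcal N_0$ together with $L\in\mathcal N_0$ and $L\supseteq N^\ast$. At level~2 you plug in $T_1\setminus\{i\}$, and the new $\mathcal P_1$-minimum $L'$ satisfies only $L'\supseteq T_1$; since $T_1$ is \emph{not} maximal in $\mathcal N_0$ (it lies strictly below $N^\ast$), you cannot force $L'=T_1$. This breaks all three of your intended invariants simultaneously. First, the descent measure fails: the level-3 antichain elements are only $\subsetneq L'$, and $|L'|$ may well exceed $|T_1|$, so there is no reason for the plugged-in set to shrink. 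Second, the deduction ``$i\in T'_j$'' fails: from $T'_j\subsetneq L'$ and $T'_j\not\subseteq T_1\setminus\{i\}$ you only get that $T'_j$ meets $L'\setminus(T_1\setminus\{i\})$, which is strictly larger than $\{i\}$ whenever $L'\supsetneq T_1$. Third, your handling of the sub-case $i\notin L'$ needs $Y\subseteq L'$ to invoke maximality of $|N^\ast|$, but $Y\subseteq T_1$ is not guaranteed (indeed $T_1$ is incomparable to $N^\ast\setminus\{i\}\supseteq Y$), so the maximality argument does not carry over.

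The paper sidesteps the recursion entirely by a different extremal choice: take $Y$ of \emph{maximal size} among all witnesses to the middle alternative of Lemma~\ref{firstprelim}, and plug in $Y\cup\{i\}$ rather than $N^\ast\setminus\{i\}$. The $\mathcal P_2$-part and $\mathcal P_1$-part cases splice just as in your argument. In the $\mathcal A_k$-part case (with $E\in\mathcal M$, $F\in\mathcal N$ the adjusted $\mathcal P_2$-maximum and $\mathcal P_1$-minimum, and after disposing of the possibility $i\in E$), one has $E\subsetneq Y\subsetneq F$ with $Y\in\mathcal F$; hence replacing $Y\cup\{i\}$ by $Y$ in the antichain would yield a copy of $\widehat{\mathcal P}$ inside $\mathcal F$ unless $Y$ is comparable to some $T_j$. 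Incomparability of $T_j$ with $Y\cup\{i\}$ then forces $Y\subsetneq T_j$ and $i\notin T_j$, so $T_j$ is itself a middle-alternative witness (between $E\in\mathcal M$ and $F\in\mathcal N$) with $|T_j|>|Y|$ --- an immediate contradiction to maximality, with no recursion needed.
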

\begin{proof}
By Lemma~\ref{firstprelim}, we may assume that there exists $Y\in\mathcal F$ such that $i\notin Y$ and $M\subsetneq Y\subsetneq N$ for some $M\in\mathcal M$ and $N\in\mathcal N$, otherwise we are immediately done. Let $Y$ have maximal size with respect to the above properties. If $i\notin N$, then we are done, so we may assume that $i\in N$, which implies that $Y\cup\{i\}\subseteq N$. If $Y\cup \{i\}\in \mathcal F$, then we are done, so we may assume that $Y\cup\{i\}\notin\mathcal F$. Therefore $\mathcal F\cup \{Y\cup \{i\}\}$ contains a copy of $\widehat{\mathcal P}$ which contains $Y\cup \{i\}$ as an element. 

Suppose that $Y\cup \{i\}$ is in the $\mathcal P_2$ part of the copy of $\widehat{\mathcal P}$. Then $Y\cup \{i\}$ is below a copy of $\mathcal P_1*\mathcal A_k$ in $\mathcal F$. However, since $M\in \mathcal M$ and $M\subseteq Y\subset Y\cup\{i\}$; $M$, and consequently $Y\cup \{i\}$, is above a copy of $\mathcal P_2$ in $\mathcal F$. Combining the copy of $\mathcal P_1*\mathcal A_k$ above $Y\cup \{i\}$ and the copy of $\mathcal P_2$ below $Y\cup\{i\}$, we obtain a copy of $\widehat{\mathcal P}$ in $\mathcal F$, a contradiction.

Suppose now that $Y\cup \{i\}$ is in the $\mathcal P_1$ part of the copy of $\widehat{\mathcal P}$. Then $Y\cup \{i\}$ is above a copy of $\mathcal A_k* \mathcal P_2$ in $\mathcal F$. Since $N\in \mathcal F$ and $Y\cup \{i\}\notin \mathcal F$, we must have $Y\cup\{i\}\subsetneq N$. Since $N\in\mathcal N$, then $Y\cup\{i\}$ is strictly below a copy of $\mathcal P_1$ in $\mathcal F$. Combining this copy with the copy of $\mathcal A_k* \mathcal P_1$ below $Y\cup \{i\}$, we obtain a copy of $\widehat{\mathcal P}$ in $\mathcal F$, a contradiction.

Therefore $Y\cup \{i\}$ must be in the $\mathcal A_k$ part of the copy of $\widehat{\mathcal P}$ in $\mathcal F\cup \{Y\cup \{i\}\}$, which we call $\widehat{\mathcal P}$ for clarity. Let $E$ be the unique maximal element of the $\mathcal P_2$ part of this copy, and $F$ the unique minimal element of the $\mathcal P_1$ part of this copy. By definition we have that $E\in \mathcal M_0$ and $F\in \mathcal N_0$. Since $E^*\subseteq E$ for some $E^*\in \mathcal M$, then replacing the current copy of $\mathcal P_2$ with a copy of $\mathcal P_2$ below $E^*$ gives another copy of $\widehat{\mathcal P}$ in $\mathcal F\cup\{Y\cup\{i\}\}$, with $Y\cup\{i\}$ in its $\mathcal A_k$ part still. As such, we may assume that $E\in \cM$. Similarly, we may also assume that $F\in \mathcal N$. 

Let $T_1,T_2,\dots,T_{k-1}$ be the elements of the $\mathcal A_k$ part, other that $Y\cup\{i\}$. We have the following diagram.
\begin{figure}[h]
\hspace{0.1cm}\\
\centering
\begin{tikzpicture}[node distance=1.5cm and 2cm, baseline=(mid.base)]
\node[label=left:$Y\cup\{i\}$] (u1) at (0,0) {$\bullet$};
\node[label=left:$T_1$] (u2) at (2,0) {$\bullet$};
\node[label=right:$T_2$] (u3) at (4,0) {$\bullet$};
\node[font=\large] (mid) at (6,0) {$\cdots$};
\node[label=right:$T_{k-1}$] (ukm1) at (8,0) {\(\bullet\)};
\node[ label= above:$F$] (Y) at (4,1.5) {$\bullet$};
\node[label=below:$E$] (x) at (4,-1.5) {$\bullet$};
\foreach \v in {u1,u2,ukm1,u3,mid}{\draw (x) -- (\v) -- (Y);
\draw[line width = 1pt] (4,3) circle(1.5cm);
\draw[line width = 1pt] (4,-3) circle(1.5cm);
}
\scoped[every node/.style=]\node at (4, -3) {$\mathcal P_2$};
\scoped[every node/.style=]\node at (4, 3) {$\mathcal P_1$};
\end{tikzpicture}
\end{figure}
\FloatBarrier
If $i\in\ E$, then we are done, so we may assume that $i\notin E$. Since $E\subset Y\cup\{i\}$, this implies that $E\subseteq Y$. Recall that we have $M\subsetneq Y\subsetneq N$ for some $M\in\mathcal M$ and $N\in\mathcal N$. Thus, $E\neq Y$, otherwise $M\subsetneq E$, contradicting the minimality of $E$. Therefore we have $E\subsetneq Y$. Note that we also have $Y\subset Y\cup\{i\}\subsetneq F$. 

Since $\mathcal F$ is $\widehat{\mathcal P}$-free, replacing $Y\cup\{i\}$ with $Y$ cannot yield a copy of $\widehat{\mathcal P}$. Therefore we must have $Y$ comparable to $T_j$ for some $j\in[k-1]$. Without loss of generality, suppose that $Y$ and $T_1$ are comparable. Since $T_1$ and $Y\cup\{i\}$ are incomparable, we cannot have $T_1\subseteq Y$, so $Y\subsetneq T_1$. This implies that $|Y|<|T_1|$ and $i\notin T_1$. Thus $T_1$ is an element of $\mathcal F$ such that $i\notin T_1$, $E\subsetneq T_1\subsetneq F$, where $E\in\mathcal M$ and $F\in\mathcal N$, and $|Y|<|T_1|$. This contradicts the maximality of $|Y|$, and finishes the proof.
\end{proof}
\begin{lemma}\label{mainprelim}
Suppose that $|\mathcal F|<n$. Then there are at least $\frac{n + 1 - |\mathcal F|}{2}$ elements $i\in [n]$ such that $i\in M$ for some $M\in \mathcal M$, or there are at least $\frac{n + 1 - |\cF|}{2}$ elements $i\in [n]$ such that $i\notin N$ for some $N\in \mathcal N$.
\end{lemma}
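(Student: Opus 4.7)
The plan is to reduce to Lemma~\ref{secondprelim} and then argue by a simple counting argument on an auxiliary graph. Define
\[
I_1 = \{i\in[n] : i\in M \text{ for some } M\in\mathcal M\}, \quad I_2 = \{i\in[n] : i\notin N \text{ for some } N\in\mathcal N\},
\]
\[
I_3 = \{i\in[n] : \exists\, S\in\mathcal F \text{ with } i\notin S \text{ and } S\cup\{i\}\in\mathcal F\}.
\]
Lemma~\ref{secondprelim} says precisely that $I_1\cup I_2\cup I_3=[n]$, so $|I_1|+|I_2|\geq n-|I_3|$. Consequently, if we can show $|I_3|\leq|\mathcal F|-1$, then $|I_1|+|I_2|\geq n+1-|\mathcal F|$, and the larger of the two is at least $(n+1-|\mathcal F|)/2$, giving the conclusion.

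The main step is therefore to show $|I_3|\leq|\mathcal F|-1$. My plan is to build an auxiliary graph $G$ with vertex set $\mathcal F$, placing an edge between $A$ and $A\cup\{i\}$ (and labelling it by $i$) whenever $A,A\cup\{i\}\in\mathcal F$ and $i\notin A$. By definition of $I_3$, each $i\in I_3$ is the label of at least one edge of $G$. For every $i\in I_3$, select exactly one such edge $e_i$ and let $H$ be the spanning subgraph of $G$ with edge set $\{e_i : i\in I_3\}$, which has $|I_3|$ edges and at most one edge of each label.

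The key claim is that $H$ is a forest, which immediately gives $|I_3|=|E(H)|\leq|V(H)|-1=|\mathcal F|-1$. To see this, suppose $A_0, A_1,\dots,A_{k-1},A_0$ were a cycle in $H$ with edge labels $i_1,\dots,i_k$. Then along the cycle we have $A_{j+1}=A_j\triangle\{i_{j+1}\}$, so travelling all the way round yields
\[
A_0 \;=\; A_0\,\triangle\,\{i_1\}\,\triangle\,\{i_2\}\,\triangle\cdots\triangle\,\{i_k\},
\]
forcing each element of $[n]$ to occur an even number of times in the multiset $\{i_1,\dots,i_k\}$. But in $H$ each label appears at most once, so each label on the cycle would appear exactly once, which is odd — a contradiction. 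This parity observation is the only non-trivial ingredient; everything else is routine once Lemma~\ref{secondprelim} is in hand.
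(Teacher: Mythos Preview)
Your proof is correct and follows essentially the same approach as the paper: define the three index sets via Lemma~\ref{secondprelim}, build the auxiliary graph on $\mathcal F$ with one edge per element of $I_3$, and show it is acyclic to get $|I_3|\leq|\mathcal F|-1$. Your symmetric-difference parity argument for acyclicity is a minor presentational variant of the paper's argument (which tracks a single label through the cycle), but the content is the same.
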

\begin{proof}
Let $C=\{i\in [n] : \text{there exists some }W\text{ such that }i\notin W\text{ and }W,W\cup\{i\}\in \mathcal F\}$, $A =\cup_{M\in \mathcal M}M$, and $B=\cup_{N\in\mathcal N} ([n]\setminus N)$. By Lemma~\ref{secondprelim} we have that $A\cup B\cup C=[n]$.

For every $i\in C$, we pick a representative $W_i$ such that $i\notin W_i$ and $W_i, W_i\cup\{i\}\in \mathcal F$. Let $G$ be the graph with vertex set $\mathcal F$ and edge set $E(G)$ such that $\{X,Y\}\in E(G)$ if and only $\{X,Y\}=\{W_i, W_i\cup\{i\}\}$ for some $i\in C$. By construction $G$ has exactly $|C|$ edges as $\{W_i,W_i\cup\{i\}\} \neq \{W_j,W_j\cup\{j\}\}$ for all $i \neq j$. We claim that $G$ is acyclic. Indeed, suppose that $G$ contains a cycle $A_1,A_2,\dots,A_k$ for some $k\geq 3$. Then there exist distinct singletons $i_1,\dots,i_k$ such that $A_{j+1} = A_j \pm \{i_j\}$ for $j\in [k-1]$, and $A_1 = A_k \pm \{i_k\}$. This means that for all $j\neq k$, $i_k\in A_j$ if and only if $i_k\in A_{j+1}$. Hence, by transitivity, $i_k\in A_1$ if and only if $i_k\in A_k$. But this is a contradiction as $A_1 = A_k \pm \{i_k\}$. Therefore, $G$ is an acyclic graph, which means that $|E(G)|\leq |V(G)|-1$. Thus $|M|\leq |\mathcal F|-1$.

We now have that $|A|+|B|\geq |A\cup B|\geq |[n]\setminus C|\geq n+1-|\mathcal F|$. Hence, at least one of $|A|$ or $|B|$ is at least $\frac{n+1-|\cF|}{2}$, as claimed.
\end{proof}
\section{Further analysis of saturated families}
By Lemma~\ref{mainprelim} and the symmetry of the statement under taking complements, we may assume without loss of generality that there are at least $\frac{n + 1 - |\mathcal F|}{2}$ elements $i\in [n]$ such that $i\not\in N$ for some $N\in \mathcal N$.

Let $\mathcal L_0$ be the set of elements above a copy of $\mathcal A_k*\mathcal P_2$ in $\mathcal F$. More precisely, $\mathcal L_0=\{X\in\mathcal P([n]):\text{there exists a subset }\mathcal S\text{ of }\mathcal F\text{ such that }\mathcal S\cup \{X\}\text{ is a copy of }\bullet*\mathcal A_k*\mathcal P_2, \text{ with } X \text{ being the maximal}\\ \text{element of such a copy}\}$. Let $\mathcal L_1$ be the set of minimal elements of $\mathcal L_0$. Finally, let $\mathcal L = \{X \in \mathcal L_1 :\text{there exists no } N\in\mathcal N\text{ such that }N\subseteq X\}$.

Additionally, let $G(\mathcal L)$ be all the sets in $\mathcal F$ that generate $\mathcal L$. More precisely, for every $L\in\mathcal L$, let $G(L)=\{P\in\mathcal F:\text{there exists a subset $\mathcal S$ of }\mathcal F\text{ such that } \mathcal S\cup \{L,P\} \text{ is a copy of $\bullet*\mathcal A_k*\mathcal P_2$ with } L\\\text{as the maximal element of the copy}\}$. Then $G(\mathcal L)=\cup_{L\in\mathcal L}G(L)$. Finally, let $W=\{i\in[n]:i\notin X\text{ for all }X\in\mathcal L\}$. 
\begin{lemma}\label{lemma_no_C_2_in_L_cup_N}
For $\mathcal L$ and $\mathcal N$ defined as above, $\mathcal L$ and $\mathcal N$ are disjoint, and $\mathcal L\cup \mathcal N$ is $C_2$-free.
\end{lemma}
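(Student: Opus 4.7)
The plan is to prove the disjointness claim directly from the restrictive definition of $\mathcal L$, and then prove $C_2$-freeness of $\mathcal L \cup \mathcal N$ by splitting into four cases according to which of $\mathcal L$ or $\mathcal N$ each of the two comparable elements belongs to.

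For disjointness, I would note that the definition $\mathcal L = \{X \in \mathcal L_1 : \text{no } N \in \mathcal N \text{ with } N \subseteq X\}$ does the work for free: if $X \in \mathcal L \cap \mathcal N$, then $N := X$ is in $\mathcal N$ and satisfies $N \subseteq X$, contradicting the condition in the definition of $\mathcal L$.

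For the $C_2$-free part, suppose toward a contradiction that $X, Y \in \mathcal L \cup \mathcal N$ with $X \subsetneq Y$. If both lie in $\mathcal N$, then both are in $\mathcal N_0$ with $X \subsetneq Y$, contradicting the maximality of $X$ in $\mathcal N_0$; symmetrically, if both lie in $\mathcal L \subseteq \mathcal L_1$, we contradict the minimality of $Y$ in $\mathcal L_0$. If $X \in \mathcal N$ and $Y \in \mathcal L$, then $X \in \mathcal N$ with $X \subseteq Y$ directly violates the defining condition of $\mathcal L$.

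The one substantive case is $X \in \mathcal L$ and $Y \in \mathcal N$. Here I would unpack the definitions: $X \in \mathcal L \subseteq \mathcal L_0$ gives a copy $\mathcal S$ of $\mathcal A_k * \mathcal P_2$ in $\mathcal F$ with every element of $\mathcal S$ a strict subset of $X$, while $Y \in \mathcal N \subseteq \mathcal N_0$ gives an induced copy $\mathcal T$ of $\mathcal P_1$ in $\mathcal F$ with every element of $\mathcal T$ containing $Y$. Since every element of $\mathcal S$ is strictly contained in $X \subsetneq Y$ and every element of $\mathcal T$ contains $Y$, the two families are disjoint and every element of $\mathcal T$ strictly contains every element of $\mathcal S$, so $\mathcal S \cup \mathcal T$ is an induced copy of $\mathcal P_1 * \mathcal A_k * \mathcal P_2 = \widehat{\mathcal P}$ inside $\mathcal F$, contradicting the $\widehat{\mathcal P}$-freeness of $\mathcal F$. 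The main (mild) obstacle is just being careful that the two structures live on disjoint sets and that strict containment $X \subsetneq Y$ is what provides the required strict separation between the antichain in $\mathcal S$ and the $\mathcal P_1$-copy in $\mathcal T$; once this is noted, all four cases collapse to either a definitional contradiction or the single concatenation argument above.
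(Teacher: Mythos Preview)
Your proof is correct and follows essentially the same approach as the paper: the only substantive step in both is the concatenation argument showing that $L\subseteq N$ with $L\in\mathcal L$, $N\in\mathcal N$ would yield a copy of $\widehat{\mathcal P}$ in $\mathcal F$. The paper proves this inclusion-free statement once (for $L\subseteq N$, not just $L\subsetneq N$) and reads off both disjointness and $C_2$-freeness from it, whereas you handle disjointness directly from the defining condition of $\mathcal L$ and reserve the concatenation argument for the strict case; these are minor organizational differences, not a different method.
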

\begin{proof}
We first show that if $L\in \mathcal L$ and $N\in \mathcal N$, then $L\not \subseteq N$. This immediately implies that $\mathcal L$ and $\mathcal N$ are disjoint, since if $Y\in \mathcal L\cap \mathcal N$, then $Y\subseteq Y$, a contradiction. 

Suppose for a contradiction that $L\subseteq N$ for some $L\in \mathcal L$ and $N\in \mathcal N$. By definition, there is a copy of $\mathcal A_k*\mathcal P_2$ in $\mathcal F$ strictly below $L$. Additionally, by the definition of $\mathcal N$, there exists a copy of $\mathcal P_1$ in $\mathcal F$ which has $N$ as its minimal element. This means that every element of this copy of $\mathcal P_1$ is strictly greater than every element of the copy of $\mathcal A_k*\mathcal P_2$ strictly below $L$. Thus, combining these two copies, we  obtain a copy of $\mathcal P_1* \mathcal A_k* \mathcal P_2$ in $\mathcal F$, as illustrated below, a contradiction.
\begin{figure}[h]
\hspace{0.1cm}\\
\centering
\begin{tikzpicture}[node distance=1.5cm and 2cm, baseline=(mid.base)]
\node[label=right:$L$]  (u3) at (4,-0.2) {$\bullet$};
\node[ label= above:$N$] (Y) at (4,1.3) {$\bullet$};
\draw (Y) -- (u3);
\draw[line width = 1pt] (4,2.8) circle(1.5cm);
\draw[line width = 1pt] (4,-3) circle(1.5cm);
\draw (u3)-- (4.5,-1.6);
\draw (u3)-- (3.5,-1.6);
\draw (u3)-- (5,-1.9);
\draw (u3)-- (3,-1.9);
\draw (u3)-- (4,-1.5);
\scoped[every node/.style=]\node at (4, -3) {$\mathcal A_k*\mathcal P_2$};
\scoped[every node/.style=]\node at (4, 2.8) {$\mathcal P_1$};
\end{tikzpicture}
\end{figure}
\FloatBarrier
Thus, $\mathcal L$ and $\mathcal N$ are indeed disjoint, and no element of $\mathcal L$ is contained in any element of $\mathcal N$. Moreover, since both $\mathcal L$ and $\mathcal N$ are antichains, and, by definition, no element of $\mathcal N$ is contained in any element of $\mathcal L$, we obtain that $\mathcal L\cup \mathcal N$ does not contain a copy of $C_2$. 
\end{proof}
\begin{lemma}\label{lemma_almost_C_2_saturated}
Let $S\in \mathcal P([n])\setminus(\mathcal L\cup\mathcal N)$ such that $|S|\geq |\mathcal F|$. Then $\mathcal L\cup \mathcal N\cup \{S\}$ contains a copy of $C_2$.
\end{lemma}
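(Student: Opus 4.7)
The plan is to argue by contrapositive: assume $S$ is incomparable to every element of $\mathcal L\cup\mathcal N$, and derive a contradiction. The argument proceeds by successively narrowing down the structure of $S$, making use of the unique minimum of $\mathcal P_1$ and the unique maximum of $\mathcal P_2$ from Corollary~\ref{assuminguniquemaximal}.

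If $S\notin\mathcal F$, $\widehat{\mathcal P}$-saturation places $S$ into an induced copy of $\widehat{\mathcal P}$ in $\mathcal F\cup\{S\}$. A case analysis on which part of $\mathcal P_1*\mathcal A_k*\mathcal P_2$ contains $S$ gives an element of $\mathcal L\cup\mathcal N$ strictly comparable to $S$: if $S$ is in the $\mathcal P_1$ part, then $S\in\mathcal L_0$ provides $L\in\mathcal L_1$ with $L\subseteq S$, and by the definition of $\mathcal L$ either $L\in\mathcal L$ or some $N\in\mathcal N$ with $N\subseteq L$ is strictly contained in $S$; if $S$ is in the $\mathcal A_k$ or $\mathcal P_2$ part, the minimum of the $\mathcal P_1$ copy above (respectively an element of the $\mathcal A_k$ just above $\mathcal P_2$) lies in $\mathcal F\cap\mathcal N_0$ strictly above $S$, producing some $N\in\mathcal N$ with $S\subsetneq N$. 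If $S\in\mathcal F\cap(\mathcal L_0\cup\mathcal N_0)$, direct analogues of the above furnish the required comparable element.

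The substantive case is $S\in\mathcal F\setminus(\mathcal L_0\cup\mathcal N_0)$, where the size hypothesis enters. By the acyclicity argument in Lemma~\ref{mainprelim}, the set $\{i\in S:S\setminus\{i\}\in\mathcal F\}$ embeds into $C$ and so has size at most $|\mathcal F|-1$; thus $|S|\geq|\mathcal F|$ forces some $i_0\in S$ with $S\setminus\{i_0\}\notin\mathcal F$, and saturation places $S\setminus\{i_0\}$ in a $\widehat{\mathcal P}$ copy inside $\mathcal F\cup\{S\setminus\{i_0\}\}$. If $S\setminus\{i_0\}$ lies in the $\mathcal P_1$ part, then $S\setminus\{i_0\}\in\mathcal L_0$ gives an element of $\mathcal L\cup\mathcal N$ strictly below $S\setminus\{i_0\}\subsetneq S$, contradiction. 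Otherwise, focusing on the $\mathcal A_k$ case (the $\mathcal P_2$ case is analogous), let $F$ be the minimum of the $\mathcal P_1$ above and $T_1,\ldots,T_{k-1}$ the other antichain elements. If $i_0\in F$ then $F\supsetneq S$ (using $S\notin\mathcal N_0$ to rule out $F=S$) and $F\in\mathcal N_0$ furnishes some $N\in\mathcal N$ with $N\supsetneq S$. If $i_0\notin F$, then each $T_j\subseteq F$ must contain an element outside $S$ (since $T_j$ is incomparable to $S\setminus\{i_0\}$ but $i_0\notin T_j$), so $\{S,T_1,\ldots,T_{k-1}\}$ is a $k$-antichain in $\mathcal F$ above the $\mathcal P_2$ copy; this forces $F\cup\{i_0\}\in\mathcal L_0$, and an $L\in\mathcal L_1$ (or an $N\in\mathcal N$ below it) inside $F\cup\{i_0\}=S\cup(F\setminus S)$ is then extracted and shown comparable with $S$.

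The main obstacle is this last subcase ($i_0\notin F$): forcing the extracted $L\in\mathcal L_1$ contained in $F\cup\{i_0\}$ to be genuinely comparable with $S$. Using that $L$ equals the union of the antichain it dominates (by minimality in $\mathcal L_0$), together with the constraint $L\subseteq S\cup(F\setminus S)$ and the newly constructed antichain $\{S,T_1,\ldots,T_{k-1}\}$, one has to rule out the possibility that $L$ dominates some antichain whose union is neither below nor above $S$; this is the delicate combinatorial step that closes the proof.
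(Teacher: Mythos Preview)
Your proof is incomplete, and the gap you yourself flag in the final subcase is real and, as far as I can tell, unfixable along the lines you sketch.

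Choosing a single $i_0\in S$ with $S\setminus\{i_0\}\notin\mathcal F$ and case-analysing where $S\setminus\{i_0\}$ lands in the resulting copy of $\widehat{\mathcal P}$ does not yield a contradiction in the problematic subcase ($S\setminus\{i_0\}$ in the $\mathcal A_k$ part with $i_0\notin F$). All you can extract there is some $N\in\mathcal N$ with $S\setminus N=\{i_0\}$; your alternative route through $F\cup\{i_0\}\in\mathcal L_0$ and an $L\in\mathcal L_1$ below it does not force $L$ (or any $N'\in\mathcal N$ below $L$) to be comparable with $S$. There is no structural reason why the antichain generating $L$ should have union comparable with $S$, and the ``delicate combinatorial step'' you allude to does not appear to exist.

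The paper's argument sidesteps this entirely by a counting trick. After proving a Claim (essentially the content of your first two paragraphs) that every $X\in\mathcal P([n])\setminus(\mathcal F\cup\mathcal L\cup\mathcal N)$ either strictly contains some $A\in\mathcal L\cup\mathcal N$ or is strictly contained in some $N\in\mathcal N$, it applies this to $S\setminus\{i\}$ for \emph{every} $i\in S$, not just one. For each $i$: if $S\setminus\{i\}\in\mathcal F$, set $Z_i=S\setminus\{i\}$; otherwise the Claim (the first alternative being ruled out since it would give $A\subsetneq S$) supplies $N\in\mathcal N$ with $S\setminus\{i\}\subsetneq N$, and incomparability of $S$ with $N$ forces $S\setminus N=\{i\}$, so set $Z_i=N$. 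The $Z_i$ are pairwise distinct elements of $\mathcal F$ (as $S\setminus Z_i=\{i\}$), and together with $S$ itself they give $|S|+1$ elements of $\mathcal F$, contradicting $|S|\geq|\mathcal F|$. The point is that the ``bad'' output of your subcase --- an $N\in\mathcal N$ with $S\setminus N=\{i_0\}$ --- is not a dead end at all, but precisely the ingredient for a counting argument ranging over all $i\in S$.
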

\begin{proof}We first prove the following claim as an intermediary step.
\begin{claim8}
Let $X\in \mathcal P([n])\setminus(\mathcal F\cup \mathcal L\cup \mathcal N)$. Then, either $A\subsetneq X$ for some $A\in\mathcal L\cup\mathcal N$, or $X\subsetneq N$ for some $N\in\mathcal N$.
\end{claim8}
\begin{proof}
Since $X\notin\mathcal F$, $\mathcal F\cup\{X\}$ contains a copy of $\widehat{\mathcal P}$, which must use $X$ as an element. 

If $X$ is in the $\mathcal P_1$ part of this copy, then $X$ is strictly above a copy of $ \mathcal A_k*\mathcal P_2$ in $\mathcal F$. By definition, this means that $X\in \mathcal L_0$, and so there exists $L\in\mathcal L_1$ such that $L\subseteq X$. If there exists $Y\in\mathcal N$ such that $Y\subseteq L$, then $Y\subsetneq X$ as $X\neq Y$ since only one is in $\mathcal N$. On the other hand, if $Y\not\subseteq L$ for all $Y\in \mathcal N$ then $L\in \mathcal L$ and $X\neq L$ as $X\notin\mathcal L$. Thus, in this case, $A\subsetneq X $ for some $A\in \mathcal L\cup\mathcal N$.

Suppose now that $X$ is in the $\mathcal A_k*\mathcal P_2$ part of the copy of $\widehat{\mathcal P}$. Then $X\subsetneq Y$, where $Y$ is the minimal element of the $\mathcal P_1$. By definition, $Y\subseteq N$ for some $N\in \mathcal N$, so $X\subsetneq N$, which finishes the claim.
\end{proof}

Suppose now that $S_0\in\mathcal P([n])\setminus(\mathcal L\cup\mathcal N)$ is such that $\mathcal L\cup\mathcal N\cup\{S_0\}$ is $C_2$-free. By the previous claim, we must have $S_0\in\mathcal F$. We will construct, for all $i\in S_0$, an element $Z_i\in \mathcal F$ such that $S_0\setminus Z_i=\{i\}$. Therefore $S_0$ and $Z_i$ for all $i\in S_0$ are $|S_0|+1$ pairwise distinct elements of $\mathcal F$, which implies that $|\mathcal F|\geq |S_0|+1$. Hence, if $S\in\mathcal P([n])\setminus(\mathcal L\cup\mathcal N)$ such that $|S|\geq|\mathcal F|$, then $\mathcal L\cup\mathcal N\cup\{S\}$ must contain a copy of $C_2$. 

Let now $i\in S_0$. Then $A\not \subseteq S_0\setminus \{i\}$ for all $A\in \mathcal L\cup\mathcal N$, otherwise $A$ and $S_0$ will form a $C_2$ in $\mathcal L\cup\mathcal N\cup\{S_0\}$. If $S_0\setminus\{i\}\in \mathcal F$ then we may set $Z_i=S\setminus\{i\}$, hence assume that $S_0\setminus\{i\}\notin\mathcal F$. By the previous claim, there exists $Y\in\mathcal N$ such that $S_0\setminus \{i\}\subsetneq Y$. Since $S_0\notin\mathcal N$, $S_0\neq N$, which means that $S_0$ and $Y$ must be incomparable. Therefore $i\notin Y$ and $S_0\setminus Y=\{i\}$, which finishes the claim by setting $Z_i=Y$.
\end{proof}

\begin{lemma}\label{lemma_max_min_size_L_N}
For $\mathcal L$ and $\mathcal N$ defined as above, we have that $\max \{|L| : L\in \mathcal L\}\leq |\mathcal F|\) and $\min \{|N|:N\in \mathcal N\}\geq n-|\mathcal F|$.
\end{lemma}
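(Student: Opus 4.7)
The plan is to prove both bounds by constructing, for each $i \in L$ (respectively each $i \in [n] \setminus N$), an element $Z_i \in \mathcal F$ satisfying $L \setminus Z_i = \{i\}$ (respectively $Z_i \setminus N = \{i\}$). This set-difference identity will force the $Z_i$'s to be pairwise distinct automatically, and the resulting injections yield $|L| \leq |\mathcal F|$ and $n - |N| \leq |\mathcal F|$.

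For the first inequality, I would fix $L \in \mathcal L$ and $i \in L$, and take $Z_i = L \setminus \{i\}$ whenever $L \setminus \{i\} \in \mathcal F$. Otherwise, saturation produces a copy of $\widehat{\mathcal P}$ in $\mathcal F \cup \{L \setminus \{i\}\}$ using $L \setminus \{i\}$. Were $L \setminus \{i\}$ in the $\mathcal P_1$ part, it would be strictly above a copy of $\mathcal A_k * \mathcal P_2$ in $\mathcal F$, placing it in $\mathcal L_0$ and contradicting the minimality of $L \in \mathcal L_1$. So $L \setminus \{i\}$ must lie in the $\mathcal A_k * \mathcal P_2$ portion, making the unique minimum $Y$ of the $\mathcal P_1$ portion an element of $\mathcal F \cap \mathcal N_0$ with $Y \supsetneq L \setminus \{i\}$. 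Picking $N \in \mathcal N$ with $Y \subseteq N$, Lemma~\ref{lemma_no_C_2_in_L_cup_N} forces $L$ and $N$ to be incomparable, whence $i \notin N$ and $L \setminus N = \{i\}$; then set $Z_i = N$.

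For the second inequality, the strategy is symmetric in spirit. Fix $N \in \mathcal N$ and $i \in [n] \setminus N$; if $N \cup \{i\} \in \mathcal F$, take $Z_i = N \cup \{i\}$. Otherwise, examine the copy of $\widehat{\mathcal P}$ in $\mathcal F \cup \{N \cup \{i\}\}$ using $N \cup \{i\}$. If $N \cup \{i\}$ lies in the $\mathcal A_k * \mathcal P_2$ part, the unique minimum of the $\mathcal P_1$ part lies in $\mathcal N_0$ and strictly contains $N$, violating the maximality of $N$. So $N \cup \{i\}$ must lie in the $\mathcal P_1$ part, and there is a copy $\mathcal S$ of $\mathcal A_k * \mathcal P_2$ in $\mathcal F$ strictly below $N \cup \{i\}$. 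If no element $T_j$ of the $\mathcal A_k$ portion of $\mathcal S$ contained $i$, then each $T_j$ would satisfy $T_j \subsetneq N$ (since $T_j \subseteq N$ and the antichain condition with $k \geq 2$ rules out $T_j = N$), so $\mathcal S \subseteq \mathcal P(N)$; combining $\mathcal S$ with the $\mathcal P_1$ copy witnessing $N \in \mathcal N_0$ would produce a copy of $\widehat{\mathcal P}$ in $\mathcal F$, contradicting saturation. Hence some $T_{j_1}$ contains $i$ and satisfies $T_{j_1} \setminus N = \{i\}$; take $Z_i = T_{j_1}$.

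The main obstacle is the last case above: when $N \cup \{i\}$ lands in the $\mathcal P_1$ part, one must splice the $\mathcal A_k * \mathcal P_2$ copy strictly below $N \cup \{i\}$ with the $\mathcal P_1$ copy strictly above $N$ to force a forbidden $\widehat{\mathcal P}$ in $\mathcal F$, and this splicing relies crucially on $\mathcal A_k$ being an antichain of size at least two (so that no $T_j$ can coincide with $N$ while the rest remain incomparable to it).
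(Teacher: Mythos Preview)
Your proposal is correct and follows essentially the same approach as the paper. The only differences are cosmetic: for the $\mathcal L$ bound you invoke Lemma~\ref{lemma_no_C_2_in_L_cup_N} to get $L$ and $N$ incomparable, whereas the paper repeats the splicing argument inline; and for the $\mathcal N$ bound you locate $i$ directly in one of the antichain elements $T_{j_1}$, whereas the paper first takes the union $Y$ of the entire $\mathcal A_k*\mathcal P_2$ part and then picks any $Z_i\subseteq Y$ from $\mathcal F$ containing $i$.
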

\begin{proof}
We show first that $\min \{|N|:N\in \mathcal N\}\geq n-|\mathcal F|$. Let $N\in \mathcal N$. Then we claim the following.
\begin{claim9}
For every element $i\notin N$, there exists an element $Z_i\in\mathcal F$ such that $Z_i\setminus N=\{i\}$.
\end{claim9}
\begin{proof} If $N\cup\{i\}\in \mathcal F$, then we are done by setting $Z_i = N\cup \{i\}$. Therefore we may assume that $N\cup \{i\}\not\in\mathcal F$, which implies that $\mathcal F\cup\{N\cup\{i\}\}$ contains a copy of $\widehat{\mathcal P}$, which must have $N\cup\{i\}$ as an element.

Suppose first that $N\cup \{i\}$ is not in the $\mathcal P_1$ part of this copy, and let $X$ be the minimal element of its $\mathcal P_1$ part. We must then have $N\subset N\cup\{i\}\subsetneq X$. Moreover, since there is a copy of $\mathcal P_1$ above $X$ in $\mathcal F$, $X\in\mathcal N_0$. However, this is a contradiction as $N$ is a maximal set of $\mathcal N_0$. 

Therefore, $N\cup\{i\}$ must be in the $\mathcal P_1$ part of the copy. Let $Y$ be the union of all the elements in the $\mathcal A_k*\mathcal P_2$ part of the copy. We clearly have that $Y\subseteq N\cup \{i\}$, and $Y$ strictly contains every element of the $\mathcal A_k*\mathcal P_2$ part, since it does not have a unique maximal element ($k\geq2$). If $Y\subseteq N$, then the copy of $\mathcal P_1$ in $\mathcal F$ above $N$ (which exists as $N\in\mathcal N$) combined with the $\mathcal A_k*\mathcal P_2$ part from our copy of $\widehat{\mathcal P}$ form an induced copy of $\widehat{\mathcal P}$ in $\mathcal F$, a contradiction. Hence $Y\subseteq N\cup \{i\}$ and $Y\not \subseteq N$. This means that $Y\setminus N = \{i\}$. By definition, $Y$ is a union of elements of $\mathcal F$, so there exists some $Z_i\in\mathcal F$ such that $Z_i\subseteq Y$ and $i\in Z_i$. Hence $Z_i\setminus N = \{i\}$, which completes the proof of the claim.
\end{proof}
The above claim gives $n - |N|$ distinct elements of $\mathcal F$, which tells us that $|N|\geq n-|\mathcal F|$. Since this holds for every $N\in\mathcal N$, we indeed have that $\min \{|N|:N\in \mathcal N\}\geq n - |\mathcal F|$.

We now show that $\max\{|L|:L\in\mathcal L\}\leq |\mathcal F|$. Let $L\in \mathcal L$. We prove the following claim, which gives us $|L|$ distinct elements of $\mathcal F$. Thus, $|L|\leq |\mathcal F|$, which consequently gives $\max\{|L|:L\in\mathcal L\}\leq|\mathcal F|$.
\begin{claim9}
For every $i\in L$, there exists an element $Z_i\in \mathcal F$ such that $L\setminus Z_i = \{i\}$.
\end{claim9}
\begin{proof}
If $L\setminus\{i\}\in \mathcal F$, then we are done by setting $Z_i= L\setminus\{i\}$, so we may assume that $L\setminus\{i\}\notin \mathcal F$. This means that $\mathcal F\cup \{L\setminus\{i\}\}$ contains a copy of $\widehat{\mathcal P}$ that uses $L\setminus\{i\}$ as one of its elements.

Since $L\in \mathcal L$, hence a minimal element of $\mathcal L_0$, and $L\setminus\{i\}\subsetneq L$, $L\backslash \{i\}$ is not above a copy of $\mathcal A_k*\mathcal P_2$ in $\mathcal F$. Thus $L\setminus\{i\}$ is not in the $\mathcal P_1$ part of our copy of $\widehat{\mathcal P}$. This implies that there exists a copy of $\mathcal P_1$ above $L\setminus\{i\}$ in $\mathcal F$. By looking at the minimal element of such a copy of $\mathcal P_1$, we see that $L\setminus\{i\}\subseteq N$ for some $N\in \mathcal N$. If $L\subseteq N$, there exists a copy of $\mathcal P_1$ above $L$ in $\mathcal F$. On the other hand, since $L\in\mathcal L$, there exists a copy of $\mathcal A_k*\mathcal P_2$ in $\mathcal F$. Combining these two copies, below and above $L$, we obtain a copy of $\widehat{\mathcal P}$ in $\mathcal F$, a contradiction. Therefore, $L\not\subseteq N$ and $L\setminus\{i\}\subseteq N$, implying that $L\setminus N = \{i\}$, hence completing the claim by setting $Z_i=N$.
\end{proof}
\end{proof}
\begin{lemma} Suppose that $|\mathcal F|<\frac{n}{2}$. Then there are at least $\frac{n+1-3|\mathcal F|}{2}$ elements $i\in [n]$ such that $i\in L$ for some $L\in \mathcal L$.
\label{boundingW}
\end{lemma}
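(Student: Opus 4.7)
The plan is to reduce the statement to showing $|W \cap B| \leq |\mathcal F|$, where $W = \{i \in [n] : i \notin L \text{ for all } L \in \mathcal L\}$ is the complement of the desired ``covered'' set and $B = \bigcup_{N \in \mathcal N}([n] \setminus N)$. Combined with the standing assumption $|B| \geq (n+1-|\mathcal F|)/2$ (from Lemma~\ref{mainprelim} and the symmetry noted at the start of Section~4) and the trivial bound $|W \setminus B| \leq n - |B| \leq (n-1+|\mathcal F|)/2$, this would give $|W| \leq (n-1+3|\mathcal F|)/2$, equivalently $|[n] \setminus W| \geq (n+1-3|\mathcal F|)/2$, as required.

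To bound $|W \cap B|$, fix $i \in W \cap B$, pick any $N_i \in \mathcal N$ with $i \notin N_i$, and set $S_i := N_i \cup \{i\}$. By Lemma~\ref{lemma_max_min_size_L_N} and the hypothesis $|\mathcal F| < n/2$, we have $|S_i| \geq n - |\mathcal F| + 1 > |\mathcal F|$; in particular $|S_i|$ strictly exceeds $\max\{|L| : L \in \mathcal L\}$, so $S_i \notin \mathcal L$, and $S_i \supsetneq N_i$ combined with the maximality of $N_i$ in $\mathcal N_0$ gives $S_i \notin \mathcal N$. Lemma~\ref{lemma_almost_C_2_saturated} then yields some $A \in \mathcal L \cup \mathcal N$ comparable to $S_i$, and size and antichain considerations force $A \subsetneq S_i$. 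The case $A \in \mathcal L$ is impossible: if $i \notin A$ then $A \subseteq N_i$, contradicting Lemma~\ref{lemma_no_C_2_in_L_cup_N}; and if $i \in A$ then $i \in \bigcup \mathcal L$, contradicting $i \in W$. Thus $A \in \mathcal N$, and the antichain property of $\mathcal N$ forces $A \setminus N_i = \{i\}$ with $i \in A$. Write $A_i$ for this element.

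It now suffices to argue that the map $i \mapsto A_i$ injects $W \cap B$ into $\mathcal N \subseteq \mathcal F$, yielding $|W \cap B| \leq |\mathcal N| \leq |\mathcal F|$. The natural route is to construct an auxiliary graph $H$ on $\mathcal N$ with one edge $\{N_i, A_i\}$ labelled by each $i \in W \cap B$---distinct labels produce distinct edges because $i$ is recovered uniquely as $A_i \setminus N_i$---and to argue that $H$ is a forest, in analogy with the graph $G$ built in the proof of Lemma~\ref{mainprelim}. A forest on at most $|\mathcal F|$ vertices has at most $|\mathcal F| - 1$ edges, which would give $|W \cap B| \leq |\mathcal F| - 1$ and close the proof.

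The main obstacle is establishing acyclicity of $H$: unlike in Lemma~\ref{mainprelim}, where each edge is a symmetric difference of exactly one element, here $N_i \triangle A_i = \{i\} \cup (N_i \setminus A_i)$ may include elements beyond the label $i$, so a hypothetical cycle does not immediately contradict a single-indicator toggle. I expect this to be handled by a canonical choice of $N_i$---for instance, picking $N_i$ of minimum size among $\{N \in \mathcal N : i \notin N\}$ to control $|N_i \setminus A_i|$---together with a refined indicator-tracking argument along a hypothetical cycle, or alternatively by a direct injectivity argument that exploits more of the structural information attached to $A_i$.
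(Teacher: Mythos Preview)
Your reduction to bounding $|W\cap B|$ is sound, and the arithmetic $|W|\le |W\cap B|+(n-|B|)$ does yield the desired conclusion once $|W\cap B|\le|\mathcal F|$ is established. But the core step has a fatal gap: when you apply Lemma~\ref{lemma_almost_C_2_saturated} to $S_i=N_i\cup\{i\}$, the conclusion is only that $\mathcal L\cup\mathcal N\cup\{S_i\}$ contains a copy of $C_2$, and $N_i\subsetneq S_i$ already witnesses this. The lemma is therefore entitled to hand you back $A=N_i$, from which you cannot conclude $i\in A$; nothing guarantees a witness $A\neq N_i$, so the construction of $A_i$ collapses at the outset. Even granting some $A_i$, you yourself flag that acyclicity of $H$ is unresolved, and since $N_i\triangle A_i$ need not be a singleton the parity trick from Lemma~\ref{mainprelim} does not transfer.

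The paper sidesteps both problems. It fixes the minimal-cardinality choice of $N_i$ from the start and restricts attention to those $i\in B$ for which $N_i=N_{i_2}$ for some $i_2\neq i$; there are at least $|B|-|\mathcal N|\ge (n+1-3|\mathcal F|)/2$ such $i$, and it shows directly that each of them lies in $\bigcup\mathcal L$. The key is to apply Lemma~\ref{lemma_almost_C_2_saturated} not to $N_i\cup\{i\}$ but to $(N_i\setminus\{j\})\cup\{i\}$ for every $j\in N_i$: this set does not contain $N_i$, so the trivial witness is eliminated. Minimality of $|N_i|$ together with the partner $i_2\notin N_i$ disposes of the ``below'' case, forcing each resulting $X_j$ to lie in $\mathcal N$ with $j\notin X_j$, and hence producing $|N_i|>n/2>|\mathcal F|$ distinct elements of $\mathcal F$, a contradiction. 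Read contrapositively, this is precisely the injection $i\mapsto N_i$ on $W\cap B$ that you were after, obtained without any auxiliary $A_i$ or forest argument.
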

\begin{proof}
Recall that there are at least $\frac{n + 1-|\mathcal F|}{2}$ elements $i\in [n]$ such that $i\notin N$ for some $N\in \mathcal N$. Let $\mathcal B=\{i\in [n]:\text{ there exists } N\in\mathcal N\text{ such that }i\notin N$. By assumption, we have that $|\mathcal B|\geq \frac{n+1-|\mathcal F|}{2}$. For all $i\in \mathcal B$ let $N_i\in\mathcal N$ be an element of minimal cardinality  that does not contain $i$. In other words, $N_i$ is an element of $\{X\in \mathcal N: i\notin X\}$ with minimal cardinality. Let $\mathcal C = \{i \in\mathcal B: N_i = N_j \text{ for some $j\in\mathcal B\setminus\{i\}$}\}$. By construction, we have that $|\mathcal C|\geq |\mathcal B| - |\mathcal N|\geq \frac{n+1-3|\mathcal F|}{2}$. We now show that if $i\in\mathcal C$, then $i\in L$ for some $L\in\mathcal L$, which finishes the proof.

Suppose that there exists $i\in \mathcal C$ such that $i\notin L$ for all $L\in\mathcal L$. By Lemma~\ref{lemma_max_min_size_L_N}, we have that for all $j\in\mathcal B$, $|N_j|\geq n-|\mathcal F|\geq |\mathcal F|$, since $|\mathcal F|\leq\frac{n}{2}$. In particular, this means that for all $j\in N_i$ we have that $|(N_i\setminus\{j\})\cup\{i\}|=|N_i|\geq|\mathcal F|$. By Lemma~\ref{lemma_almost_C_2_saturated}, this means that for all $j\in N_i$, there exists $X_j\in \mathcal L\cup\mathcal N$ such that $X_j$ is comparable (or equal) to $(N_i\setminus\{j\})\cup \{i\}$.

If $X_j \subsetneq (N_i\setminus\{j\})\cup \{i\}\), then $|X_j|<|N_i|$. By assumption, $N_i = N_{i_2}$ for some $i_2\in\mathcal B \setminus \{i\}$. Therefore, $i_2\notin X_j$ and $|X_j|<||N_i|=N_{i_2}|$, which, by the minimality of $|N_{i_2}|$ implies that $X_j\notin \mathcal N$, so $X_j\in\mathcal L$. By our assumption, we the have $i\notin X_j$. However, this now means that $X_j\subset N_i$, which contradicts Lemma~\ref{lemma_no_C_2_in_L_cup_N} as $X_j$ and $N_i$ would induce a $C_2$ in $\mathcal L\cup\mathcal N$. Therefore, $(N_i\setminus\{j\})\cup\{i\}\subseteq X_j$ for all $j\in N_i$. We observe that $X_j\neq N_i$, which means that $X_j$ and $N_i$ are incomparable by Lemma~\ref{lemma_no_C_2_in_L_cup_N}. Since $N_i\setminus\{j\}\subset X_j$, we must have that $j\notin X_j$. Thus, if $j, k \in N_i$ are distinct, then $X_j\neq X_k$. Moreover, by Lemma~\ref{lemma_max_min_size_L_N}, we have that $|\mathcal F|<n-|\mathcal F| \leq|N_i|\leq|X_j|$, which implies that $X_j\in \mathcal N$. However, this would mean that $\{X_j : j\in N_i\}$ is a set of $|N_i| >\frac{n}{2}$ elements of $\mathcal N\subseteq\mathcal F$, a contradiction.
\end{proof}
\section{Proof of the main result}
We are now ready to prove that the saturation number of $\mathcal P_1*\mathcal A_k*\mathcal P_2$ is at least linear. Having the above preliminary lemmas, we are going to use a similar upper bounding technique as the one used in \cite{diamondlinear}. Although the setup needs a straightforward adjustment, from the diamond to the general $\mathcal P_1*\mathcal A_k*\mathcal P_2$, which we present below, the proof of Lemma~\ref{boundingsizecalAcalB} (\cite[Lemma 6]{diamondlinear}) is identical, and hence we do not include it here.

Let $X$ be a set and $m$ a positive integer such that $2m+1\leq |X|$. We define $\mathcal L(X,m)$ to be the set of disjoint pairs of subsets of $\mathcal P(X)$, $(\mathcal G,\mathcal H)$ such that all elements of $\mathcal G$ have size at most $m$, all elements of $\mathcal H$ have size at least $n-m$, and for every $i\in X$, there exists an element $G\in \mathcal G$ such that $i\in G$. Additionally, any induced copy of $C_2$ in $\mathcal G\cup \mathcal H$ must be contained in $\mathcal H$, and for any $A\in \mathcal P(X)$ with $m\leq|A|\leq|X|-m$, there exists an element $B\in\mathcal G\cup\mathcal H$ such that either $B\subseteq A$, or $A\subseteq B$.

Next, let $\mathcal I,\mathcal J\subseteq \mathcal P(X)$ and define $\mathcal T_0(\mathcal I,\mathcal J)=\{A\in \mathcal P(X):B\not\subseteq A\text{  for all }B\in\mathcal J, \text{ and }\exists \mathcal S\subseteq\mathcal I \text{ such that } S\cup\{A\} \text{ is a copy of $\bullet*\mathcal A_k*\mathcal P_2$, with } A \text { being its maximal element}\}$.

We define $\mathcal T(\mathcal I,\mathcal J)$ to be the set of minimal elements of $\mathcal T_0(\mathcal I, \mathcal J)$.

Finally, define $\mathcal L_*(X,m)$ to be the set of pairs of $\mathcal P(X)$, $(\mathcal I,\mathcal J)$, such that $(\mathcal T(\mathcal I, \mathcal J), \mathcal J)\in\mathcal L(X,m)$, and all sets of $\mathcal I$ have size at most $m$. Lastly, define $f(n,m) = \min \{|\mathcal I\cup\mathcal J|:(\mathcal I,\mathcal J)\in\mathcal L_*([n],m)\}$.   

We then have the following lemma, the proof of which is the same as in \cite[Lemma 6]{diamondlinear}.
\begin{lemma}\label{boundingsizecalAcalB} For any positive integers $n$ and $m$ such that $n\geq 2m+1$, we have $f(n,m)\geq n - 2m$.
\end{lemma}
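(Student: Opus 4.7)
The plan is to reproduce the chain-witness argument of \cite[Lemma 6]{diamondlinear}, which we are told carries over identically (modulo the straightforward adjustment that the middle antichain has size $k \geq 2$ rather than exactly $2$). Fix a pair $(\mathcal I, \mathcal J) \in \mathcal L_*([n], m)$ and write $\mathcal G = \mathcal T(\mathcal I, \mathcal J)$, so that $(\mathcal G, \mathcal J) \in \mathcal L([n], m)$. The goal is to produce an injection from $\{1, 2, \ldots, n - 2m\}$ into $\mathcal I \cup \mathcal J$.

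First, I would consider the chain $A_j = [m+j]$ for $j = 1, \ldots, n - 2m$, whose sizes lie strictly between $m$ and $n-m$. For each such $j$, the comparability clause in the definition of $\mathcal L([n], m)$ supplies some $B_j \in \mathcal G \cup \mathcal J$ comparable with $A_j$. The size separation between $\mathcal G$ and $\mathcal J$ forces a clean dichotomy: an element of $\mathcal G$ has size at most $m < |A_j|$, so $B_j \in \mathcal G$ must satisfy $B_j \subsetneq A_j$; an element of $\mathcal J$ has size at least $n-m \geq |A_j|$, so $B_j \in \mathcal J$ must satisfy $A_j \subseteq B_j$.

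Next, I would upgrade each $\mathcal G$-witness into an $\mathcal I$-witness. If $B_j \in \mathcal G$, then by the definition of $\mathcal T(\mathcal I, \mathcal J)$ there is an induced copy of $\bullet * \mathcal A_k * \mathcal P_2$ in $\mathcal I \cup \{B_j\}$ with $B_j$ on top, which supplies $k \geq 2$ pairwise incomparable elements of $\mathcal I$ in the $\mathcal A_k$-layer sitting strictly below $B_j$; I would pick one as $E_j \in \mathcal I$. If instead $B_j \in \mathcal J$, I would simply set $E_j = B_j \in \mathcal J$. This defines a map $E \colon j \mapsto E_j \in \mathcal I \cup \mathcal J$.

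The main obstacle, and indeed the entire content of the lemma, is to show that the selections $E_j$ can be made pairwise distinct. There are two sources of collision: distinct indices $j$ may share the same $\mathcal J$-witness (a single large set can contain all of the $A_j$), and distinct $\mathcal G$-witnesses $B_j$, $B_{j'}$ may accidentally produce the same $\mathcal I$-element. The diamond argument handles both simultaneously through a careful greedy selection that leverages (i) the antichain $\mathcal A_k$ with $k \geq 2$, guaranteeing at least two distinct $\mathcal I$-candidates below each $\mathcal G$-witness and allowing one to avoid previous choices; (ii) the incomparability between $\mathcal G$ and $\mathcal J$ built into the definition of $\mathcal L$, which rules out crossings between the two witness types; and (iii) a chain-monotonicity argument along the increasing sequence $A_j$, which controls how many $j$'s can share a common $\mathcal J$-witness. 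Transcribing this bookkeeping exactly as in \cite{diamondlinear}, with the trivial substitution of $\mathcal A_k$ for $\mathcal A_2$, yields an injection from $\{1, \ldots, n-2m\}$ into $\mathcal I \cup \mathcal J$ and hence $f(n,m) \geq n - 2m$.
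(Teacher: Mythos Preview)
The paper does not supply its own proof of this lemma: it states explicitly that the argument is identical to \cite[Lemma~6]{diamondlinear} and omits all details. Your proposal ultimately takes the same route, deferring the substance to that same citation, so at the level of approach you and the paper agree.

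That said, the sketch you add on top of the citation is not itself a proof, and one of its concrete claims is suspect. The injectivity of $j\mapsto E_j$ is the entire content of the lemma, and your items (i)--(iii) are gestures rather than arguments. In particular, with your fixed chain $A_j=[m+j]$, a single element of $\mathcal J$ containing $[n-m]$ would swallow every $A_j$, so ``chain-monotonicity along the $A_j$'' cannot by itself prevent all $n-2m$ indices from landing on the same $\mathcal J$-witness; whatever the diamond argument actually does here must also exploit the covering property of $\mathcal G$ and the $C_2$-freeness between $\mathcal G$ and $\mathcal J$, not merely the nesting of the test sets. Since neither the paper nor your proposal spells this step out, both stand or fall with the cited diamond argument, but you should not present the specific chain $A_j=[m+j]$ and the three-ingredient outline as if they were verified features of that proof.
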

We will lower bound $G(\mathcal L)\cup\mathcal N$ with the help of Lemma~\ref{boundingsizecalAcalB}, after removing the set $W$, which we know how to upper bound by Lemma~\ref{boundingW}.
\begin{theorem}
Let $n\geq 1$, and $\mathcal F$ a $\mathcal P_1*\mathcal A_k*\mathcal P_2$-saturated family with ground set $[n]$. Then $|\mathcal F|\geq\frac{n+1}{9}$.
\label{maintheorem}
\end{theorem}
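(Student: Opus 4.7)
The plan is to lift the set-theoretic bound of Lemma~\ref{boundingsizecalAcalB} into the poset-saturation setting, using the structural information about $\mathcal L$, $\mathcal N$, $G(\mathcal L)$ and $W$ developed in Sections 3 and 4. We may assume $|\mathcal F|<n/2$, since otherwise the conclusion is immediate. Write $m=|\mathcal F|$ and $X=[n]\setminus W$; Lemma~\ref{boundingW} then gives $|X|\geq\tfrac{n+1-3m}{2}$.

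Next, I would take $\mathcal I=G(\mathcal L)$ and $\mathcal J=\{N\cap X:N\in\mathcal N\}$. Both live in $\mathcal P(X)$: every $P\in G(L)$ is a strict subset of $L\subseteq X$, and the intersection with $X$ is built into the definition of $\mathcal J$. I would then verify $(\mathcal I,\mathcal J)\in\mathcal L_*(X,m)$ condition by condition. The size bounds $|P|<|L|\leq m$ and $|N\cap X|\geq|N|-|W|\geq|X|-m$ come from Lemma~\ref{lemma_max_min_size_L_N}. The $C_2$-freeness of $\mathcal T(\mathcal I,\mathcal J)\cup\mathcal J$ outside $\mathcal J$ descends from Lemma~\ref{lemma_no_C_2_in_L_cup_N}, and the comparability condition for $A\in\mathcal P(X)$ with $m\leq|A|\leq|X|-m$ follows by applying Lemma~\ref{lemma_almost_C_2_saturated} to $A$ viewed inside $\mathcal P([n])$. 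Finally, the covering condition uses the defining property of $W$: every $i\in X$ lies in some $L\in\mathcal L$, and one argues that $L$ (or an appropriate refinement of it) is an element of $\mathcal T(\mathcal I,\mathcal J)$ containing $i$.

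Once the pair is verified, Lemma~\ref{boundingsizecalAcalB} yields $|\mathcal I\cup\mathcal J|\geq|X|-2m$; the hypothesis $|X|\geq 2m+1$ is automatic in the only case of interest, $m<\tfrac{n+1}{9}$. To convert this into a bound on $|\mathcal F|$, choose a section $\psi:\mathcal J\to\mathcal N\subseteq\mathcal F$ of the restriction map $N\mapsto N\cap X$, and define $\Phi:\mathcal I\cup\mathcal J\to\mathcal F$ by $\Phi(x)=x$ for $x\in\mathcal I$ (using $\mathcal I=G(\mathcal L)\subseteq\mathcal F$) and $\Phi(x)=\psi(x)$ for $x\in\mathcal J\setminus\mathcal I$. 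This $\Phi$ is injective: a clash would force some $x\in\mathcal I\subseteq\mathcal P(X)$ to equal $\psi(y)$ for $y\in\mathcal J\setminus\mathcal I$, but then intersecting with $X$ gives $y=\psi(y)\cap X=x\in\mathcal I$, contradicting $y\notin\mathcal I$. Therefore $|\mathcal I\cup\mathcal J|\leq m$, and together with $|\mathcal I\cup\mathcal J|\geq|X|-2m$ and $|X|\geq\tfrac{n+1-3m}{2}$, we obtain $n+1\leq 9m$, i.e., $|\mathcal F|\geq\tfrac{n+1}{9}$.

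The main obstacle is the verification of the covering condition: one must show that after restricting to $X$, every $i\in X$ is contained in some minimal element of $\mathcal T_0(\mathcal I,\mathcal J)$. The delicate point is that an $L\in\mathcal L$ could in principle satisfy $N\cap X\subseteq L$ for some $N\in\mathcal N$ (which happens precisely when $N\setminus L\subseteq W$), pushing $L$ out of $\mathcal T_0(\mathcal I,\mathcal J)$. Ruling this out---or producing a suitable sub-element of $L$ in $\mathcal T(\mathcal I,\mathcal J)$ that still contains the given $i$---requires a careful interplay between the structure of $\mathcal N$, the definition of $W$, and the $\widehat{\mathcal P}$-freeness of $\mathcal F$.
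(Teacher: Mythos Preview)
Your plan coincides with the paper's proof: set $\mathcal I=G(\mathcal L)$, $\mathcal J=\{N\setminus W:N\in\mathcal N\}$, show $(\mathcal I,\mathcal J)\in\mathcal L_*([n]\setminus W,m)$, and feed this into Lemma~\ref{boundingsizecalAcalB}. The injection $\Phi$ you build is the paper's observation that $G(\mathcal L)$ and $\mathcal N$ are disjoint subsets of $\mathcal F$, written out explicitly.

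The only place you stop short is the ``main obstacle'' you flag at the end, and there the resolution is far simpler than you suggest: it is pure cardinality, with no interplay with $\widehat{\mathcal P}$-freeness needed. Working under $m<\tfrac{n+1}{9}$ (the paper assumes the marginally weaker $m\le\tfrac{n-1}{7}$), Lemma~\ref{lemma_max_min_size_L_N} gives $|L|\le m$ for every $L\in\mathcal L$, while each $N\cap X\in\mathcal J$ has size at least $(n-m)-|W|\ge\tfrac{n+1-5m}{2}>m$. So $N\cap X\subseteq L$ is impossible on size grounds alone, and every $L\in\mathcal L$ lies in $\mathcal T_0(\mathcal I,\mathcal J)$. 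Conversely, any $A\in\mathcal T_0(\mathcal I,\mathcal J)$ sits strictly above a copy of $\mathcal A_k*\mathcal P_2$ inside $G(\mathcal L)\subseteq\mathcal F$, hence $A\in\mathcal L_0$; picking $L'\in\mathcal L_1$ with $L'\subseteq A$, the alternative $L'\notin\mathcal L$ would produce $N\in\mathcal N$ with $N\subseteq L'\subseteq A\subseteq X$, whence $N\cap X=N\subseteq A$, contradicting $A\in\mathcal T_0$. This gives $\mathcal T(\mathcal I,\mathcal J)=\mathcal L$ exactly, the covering of $X$ by $\mathcal T(\mathcal I,\mathcal J)$ is then immediate from the definition of $W$, and the remaining conditions follow from Lemmas~\ref{lemma_no_C_2_in_L_cup_N} and~\ref{lemma_almost_C_2_saturated} as you outlined.
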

\begin{proof}
Since $\frac{n+1}{9}<1$ for $n<8$, the result is trivially true for $n<8$, so we may assume that $n\geq 8$. If $|\mathcal F|>\frac{n-1}{7}$, then we are done, so we may assume that $|\mathcal F|\leq\frac{n-1}{7}$.

Recall that $W=\{i\in[n]:i\notin X\text{ for all } X\in\mathcal L\}$. By the Lemma~\ref{boundingW}, we have that $|W|\leq \frac{n-1+3|\mathcal F|}{2}$. Since $\mathcal L$ is a subset of the set of minimal elements that are above a copy of $\mathcal A_k*\mathcal P_2$ in $\mathcal F$, if $L\in\mathcal L$, then $L=X_1\cup\dots X_k$ for some $X_1,\dots,X_k\in G(\mathcal A)$. That means that if $i\in W$, then $i\notin X$ for all $X\in G(\mathcal L)$. Conversely, if $i\notin X$ for all $X\in G(\mathcal L)$, then by the same minimality argument we have $i\in W$. Therefore $W=\{i\in[n]:i\notin X\text{ for all }X\in G(\mathcal L)\}$. Let $\widehat{N}=\{N\setminus W:N\in\mathcal N\}$.

Moreover, by Lemma~\ref{lemma_max_min_size_L_N}, we have that every set in $\mathcal L$ has size at most $|\mathcal F|$, and so every set in $G(\mathcal L)$ has size at most $|\mathcal F|$. Also, since every set in $\mathcal N$ has size at least $n-|\mathcal F|$, and $|\mathcal F|\leq\frac{n-1}{7}$, we must have that $G(\mathcal L)$ and $\mathcal N$ are disjoint. Furthermore, if $\widehat{N}\in\widehat{\mathcal N}$, then $|\widehat{N}|\geq n-|\mathcal F|-|W|=|[n]\setminus W|-|\mathcal F|$. This means that every set in $\widehat{N}$ has size at least $\frac{n+1-5|\mathcal F|}{2}$, which is greater that $|\mathcal F|$, as $|\mathcal F|\leq\frac{n-1}{7}$. Thus $\mathcal L$ and $\widehat{\mathcal N}$ are disjoint, as every element of $\mathcal L$ has size at most $|\mathcal F|$, and every element of $\widehat{\mathcal N}$ has size at least $|[n]\setminus W|-|\mathcal F|$.
\begin{claim5}$(G(\mathcal L),\widehat{\mathcal N})\in\mathcal L_*([n]\setminus W, |\mathcal F|)$.   
\end{claim5}
\begin{proof}
First, since $\mathcal L_*(X,m)$ is defined only when $2m+1\leq |X|$, we need to have $2|\mathcal F|+1\leq n-|W|$, or equivalently, $2|\mathcal F|+|W|+1\leq n$. This is true since $|W|\leq\frac{n-1+3|\mathcal F|}{2}$ and $|\mathcal F|\leq\frac{n-1}{9}$. Also, by construction, $G(\mathcal L)$ and $\widehat{\mathcal N}$ have ground set $[n]\setminus W$.

Since we already established that all sets of $G(\mathcal L)$ have size at most $|\mathcal F|$, we are left to prove that $(\mathcal T(G(\mathcal L),\widehat{\mathcal N}),\widehat{\mathcal N})\in\mathcal L([n]\setminus W)$. However, since $G(\mathcal L)$ generates $\mathcal L$ and, by cardinality, no element of $\mathcal N$ or $\widehat{\mathcal N}$ can be a subset of an element of $\mathcal L$, we notice that $\mathcal T(G(\mathcal L),\widehat{\mathcal N})=\mathcal L$.

We therefore must show that $(\mathcal L,\widehat{\mathcal N})\in\mathcal L([n]\setminus W, |\mathcal F|)$. We have already showed that $\mathcal L$ and $\widehat{\mathcal N}$ are disjoint, that every set of $\mathcal L$ has size at most $|\mathcal F|$ and every set of $\widehat{\mathcal N}$ has size at least $|n\setminus W|-|\mathcal F|$, and that $\mathcal L$ is a cover of $[n]\setminus W$ by construction. Suppose now that $\mathcal L\cup\widehat{\mathcal N}$ contains a copy of $C_2$, say $X_1\subsetneq X_2$, such that at least one of them is in $\mathcal L$. Since $\mathcal L$ is an antichain, we must have one element in $\mathcal L$ and one in $\widehat{\mathcal N}$, and since all sets of $\mathcal A$ have sizes less than the size of any element in $\widehat{\mathcal N}$, we must have $X_1\in\mathcal A$ and $X_2\in\widehat{\mathcal N}$, so $X_2=N\setminus W$ for some $N\in\mathcal N$. But now $X_1\subsetneq X_2=N\setminus W\subseteq N$, which contradicts the fact that $\mathcal L\cup\mathcal N$ is $C_2$-free (Lemma~\ref{lemma_no_C_2_in_L_cup_N}). Finally, suppose $X\in\mathcal P([n]\setminus W)$ is a set such that $|\mathcal F|\leq |X|\leq n-|W|-|\mathcal F|$. If $X\in\mathcal L$, we have $X\subseteq X$. If $X\in\mathcal N$, we have $X\setminus W\subseteq X$ and $X\setminus W\in\widehat{\mathcal N}$. Thus, we may assume that $X\notin \mathcal L\cup\mathcal N$, which together with the fact that $|X|\geq|\mathcal F|$ implies, by Lemma~\ref{lemma_almost_C_2_saturated}, that there exists $Y\in\mathcal L\cup\mathcal N$ such that $X\subset Y$, or $Y\subset X$. If $X\subset Y$, by cardinality we must have $Y\in\mathcal N$, and so $X\subset Y\setminus W$, and $Y\setminus W\in\widehat{\mathcal N}$. If on the other hand $Y\subset X$, we similarly must have $Y\in\mathcal L$, which finishes the proof of the claim.

\end{proof}
We now see that, since $G(\mathcal L)$ and $\mathcal N$ are disjoint and subsets of $\mathcal F$, $|\mathcal F|\geq |G(\mathcal L)|+|{\mathcal N}|\geq |G({\mathcal L})|+|\widehat{\mathcal N}|\geq f(n-|W|,|\mathcal F|)\geq n-|W|-2|\mathcal F|$, where the last inequality comes from Lemma~\ref{boundingsizecalAcalB}. Together with the fact that $|W|\leq\frac{n-1+3|\mathcal F|}{2}$, we get that $|\mathcal F|\geq\frac{n+1-7|\mathcal F|}{2}$, or equivalently $|\mathcal F|\geq\frac{n+1}{9}$, as desired.
\end{proof}
\section{Saturation for multipartite posets}
In this section we construct saturated families of linear size for all multipartite posets. This, together with Theorem~\ref{maintheorem} shows that the saturation number for all multipartite posets is linear, with the exception of a chain, for which the saturation number is already known to be bounded \cite{gerbner2013saturating}.

Let $K_{n_1,\dots,n_k}$ denote the complete poset with $k$ layers of sizes $n_1,\dots,n_k$, in this order, where $n_1$ is the size of the bottom layer. In other works, we have that $K_{n_1,\dots,n_k}=\mathcal A_{n_k}*\dots*\mathcal A_{n_1}$.
\begin{theorem}
Let $n_1,\dots,n_k$ be positive integers. Then $\text{sat}^*(n,K_{n_1,\dots,n_k})=O(n)$.
\end{theorem}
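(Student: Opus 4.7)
The plan is to exhibit, for every tuple $(n_1, \ldots, n_k)$, an explicit $K_{n_1, \ldots, n_k}$-saturated family of size $O(n)$. The case $n_1 = \cdots = n_k = 1$ is the $k$-chain $C_k$, whose saturation number is $O(1)$ by \cite{gerbner2013saturating}, so I may assume from now on that some $n_{j^*} \geq 2$. The construction will generalise the linear diamond-saturating witness $\binom{[n]}{\leq 1}$ (in which any $X$ with $|X|\geq 2$ places two singletons strictly between $\emptyset$ and $X$, producing an induced diamond), by grafting a constant-size ``decoration'' onto the standard chain so as to reflect the multipartite shape of $P = K_{n_1, \ldots, n_k}$.

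Explicitly, I take $\mathcal{F} = \mathcal{B} \cup \mathcal{E}$, where $\mathcal{B} = \{\emptyset \subset \{1\} \subset \cdots \subset [n]\}$ is the length-$(n+1)$ chain and $\mathcal{E}$ is a decoration of size $O_P(1)$ living inside $\mathcal{P}([M])$ for some constant $M = M(P)$. Fix heights $0 = m_0 < m_1 < \cdots < m_k = M$ along the chain, and between the shelves $[m_{i-1}]$ and $[m_i]$ install an antichain of $n_i$ sets in $\mathcal{E}$ for every $i \neq j^*$ and an antichain of only $n_{j^*} - 1$ sets at the critical level $j^*$. Then $|\mathcal{F}| = n + 1 + O_P(1) = O(n)$. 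To check $P$-freeness, observe that any induced copy of $K_{n_1, \ldots, n_k}$ must furnish an antichain of width $n_{j^*} \geq 2$ at level $j^*$; since $\mathcal{B}$ is a chain and $\mathcal{E}$ contains only $n_{j^*} - 1$ pairwise incomparable sets at that level by design, no such antichain exists in $\mathcal{F}$.

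To check saturation, take any $X \notin \mathcal{F}$ and build an induced copy of $P$ in $\mathcal{F} \cup \{X\}$. In the generic case, $X$ fits parallel to the empty slot at level $j^*$, i.e.\ $[m_{j^*-1}] \subsetneq X \subsetneq [m_{j^*}]$ and $X$ is incomparable to every envelope set at that level; then $X$ together with the rest of $\mathcal{E}$ yields an induced copy of $P$ immediately. The remaining cases, where $X$ is comparable to every envelope set at layer $j^*$ or where $X$ lies partly outside $[M]$, are handled by ``shifting'' the copy of $P$: treat $X$ as an element at a different layer and recruit backbone sets $[i]$ together with envelope sets to fill the remaining layers. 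The main obstacle is this second case, which forces $\mathcal{E}$ to contain a bounded number of staggered near-copies of $P$, each missing a different antichain element at some fixed layer, arranged so that their union still avoids an induced copy of $P$; verifying that this case analysis covers every awkward $X$ uniformly will be the most delicate part of the argument.
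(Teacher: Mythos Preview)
Your $P$-freeness argument is broken. You assert that no antichain of width $n_{j^*}$ exists ``at level $j^*$'' because $\mathcal B$ is a chain and $\mathcal E$ contributes only $n_{j^*}-1$ sets there. But any off-backbone envelope set $C$ with $[m_{i-1}]\subsetneq C\subsetneq[m_i]$ is incomparable to the backbone set $[\,|C|\,]$, so envelope and backbone together produce wider antichains than $\mathcal E$ alone. Concretely, for the diamond $K_{1,2,1}$ (so $j^*=2$, and you place a single envelope set $C$ at the middle level), your family already contains the induced diamond $[m_1]\subsetneq C,\ [\,|C|\,]\subsetneq[m_2]$. For $K_{2,2}$ with $j^*=1$ (one envelope set $C=\{2\}$ at the bottom level, two envelope sets $A,B$ on top, all containing $[m_1]$), the four sets $\{1\},C,A,B$ form an induced $K_{2,2}$ in $\mathcal B\cup\mathcal E$. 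More broadly, nothing in your argument forces the $j^*$-th layer of a hypothetical copy of $P$ to sit between $[m_{j^*-1}]$ and $[m_{j^*}]$, so even the framing of the freeness claim is unjustified.

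The saturation half is also only a plan: you yourself flag that when $X\not\subseteq[M]$ one needs ``staggered near-copies of $P$'' inside $\mathcal E$ whose union remains $P$-free, without saying how to build them. Since $P$-freeness already fails for the basic decoration, enlarging $\mathcal E$ can only make this harder.

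For comparison, the paper takes a rather different route that sidesteps both issues. It does not attempt to write down an explicit saturated family. Instead it builds a \emph{constant-size} $P$-free seed $\mathcal F_b\cup\mathcal F_t$ living inside $\mathcal P([m])$ and its complementary copy near $[n]$ (no long backbone chain to interact with), proves this seed is $P$-free by a careful finite argument, then saturates it \emph{arbitrarily} to some $\mathcal F\supseteq\mathcal F_b\cup\mathcal F_t$. The real work is showing $|\mathcal F|=O(n)$ afterwards: one slices $\mathcal F$ according to the trace $X\cap[m]$ and bounds each slice either trivially (chain layers) or via Dilworth's theorem (antichain layers), using that the seed already supplies enough structure above and below each slice to forbid wide antichains there.
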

\begin{proof}
If there exists $i\in[k]$ such that $n_i=1$, but $n_{i-1}$ and $n_{i+1}$ are not 1, then by Corollary~\ref{poset-point-poset-corollary} we have that $\text{sat}^*(n, K_{n_1,\dots,n_k})\leq\text{sat}^*(n, K_{n_1,\dots,n_{i-1},n_{i+1},\dots,n_k})$. Since we want to show that the saturation number is at most linear, we may assume that $K_{n_1,\dots,n_k}$ is equal to $\mathcal P_l*\mathcal P_{l-1}*\dots*\mathcal P_2*\mathcal P_1$ for some positive integer $l$, where each $\mathcal P_i$ is either an antichain of size at least two, or a chain of size at least two. Moreover, for all $i\in[l-1]$, $\mathcal P_i$ and $\mathcal P_{i+1}$ are not both chains (otherwise we combine them).

Let $n\geq |\mathcal P_1|+\dots|\mathcal P_l|$. We will construct a $\mathcal P_l*\dots*\mathcal P_1$-saturated family in $\mathcal P([n])$ as follows. We will start with two constant size families, one sitting inside a fixed hypercube that is low in $\mathcal P([n])$, while the other being (morally) the `reflection' of the first family. We then arbitrarily saturate the union of these families. To do so, we need to introduce some helpful notation.

Let $T$ be a set of non-negative integers, and $s\in \mathbb N$. We denote by $\binom{[s]}{T}$ the collection of all subsets of $[s]$ that have sizes equal to some element of $T$. More precisely, $\binom{[s]}{T}=\{X\in \mathcal P([s]):|X|\in T\}$.

Let $m=\sum_{i=1}^{l} |\mathcal P_i|-1$ and, for all $1\leq i\leq l-1$ define $T_i$ as follows. If $\mathcal P_i$ is an antichain, then $T_i$ is the singleton $\{\sum_{j=1}^{i-1}|\mathcal P_j| + 1\}$. If $\mathcal P_i$ is a chain, then $T_i$ is the interval $[\sum_{j=1}^{i-1}|\mathcal P_j| + 1,\sum_{j=1}^{i}|\mathcal P_j|]$.

Similarly, for each $2\leq i\leq l$, we define $U_i$ as follows. If $\mathcal P_i$ is an antichain, then $U_i$ is the singleton $\{\sum_{j=i+1}^{l}|\mathcal P_j| + 1\}$. If $\mathcal P_i$ is a chain, then $U_i$ is the interval $[\sum_{j=i+1}^l |\mathcal P_j| + 1,\sum_{j=i}^{l}|\cP_j|]$.

We first start with the following `bottom' and `top' families, both of constant size for a fixed poset of the considered form. Let $\mathcal F_b = \bigcup_{i=1}^{l-1} \binom{[m]}{T_i} \) and $\mathcal F_t=\bigcup_{i=2}^l \left\{ [n]\backslash X : X\in \binom{[m]}{U_i}\right\}$.
\begin{lemma1} The family $\mathcal F_b\cup\mathcal F_t$ is $\mathcal P_l*\dots*\mathcal P_1$-free.
\end{lemma1}
\begin{proof}
Suppose that $\mathcal F_b\cup\mathcal F_t$ contain such a copy, which we call $\mathcal P_l*\dots*\mathcal P_1$. We remark that if $i<j$, then every element of $\mathcal P_i$ is smaller than every element of $\mathcal P_j$. Additionally, since $n>2m$, every set in $\mathcal F_b$ has size strictly less than the size of any set in $\mathcal F_t$. It is easy to see that the length of the longest chain in $\mathcal F_b$ is $\sum_{i=1}^{l-1}|T_i|$, while the length of the longest chain in $\mathcal P_l*\dots*\mathcal P_1$ is the number of $\mathcal P_i$'s that are antichains plus the sum of the sizes of the ones that are chains, i.e. $\sum_{i=1}^l|T_i|$. This tells us that $\mathcal F_b$ does not contain a copy of $\mathcal P_l*\dots*\mathcal P_1$. Similarly, $\mathcal F_t$ does not contain a copy of $\mathcal P_l*\dots*\mathcal P_1$. Therefore, by looking at the smallest $i\in[l]$ for which $\mathcal P_i\not\subseteq\mathcal F_b$, we get that $\mathcal P_1*\dots*\mathcal P_{i-1}\subseteq \mathcal F_b$ and $\mathcal P_{i+1}*\dots*\mathcal P_l\subseteq \cF_t$.

Let $A = \bigcup_{X\in\mathcal P_1*\dots*\mathcal P_{i-1}} X $ and $B = \bigcap_{X\in \mathcal P_{i+1}*\dots*\mathcal P_l} X$.

\begin{claim7}
\( |A|\geq \sum_{j=1}^{i-1} |\cP_j| \).
\end{claim7}
\begin{proof}
We will prove something more general. We will show that for every $1\leq r\leq l$, if $C$ is a set such that there exists an induced copy of $\mathcal P_{r}*\dots*\mathcal P_1$ in $\mathcal F_b$ below it, i.e $\mathcal P(C)\cap \mathcal F_b$ contains an induced copy of $\mathcal P_{r}*\dots*\mathcal P_1$, then $|C|\geq \sum_{j=1}^r |\mathcal P_j|$. 

We prove this is by induction on $r$. When $r=1$ it is a trivial check, both when $\mathcal P_1$ is an antichain or a chain. Assume now that $r\geq 2$ and that the result holds for $1,\dots,r-1$. Moreover, since $\mathcal F_b$ does not contain a copy of $\mathcal P_l*\dots*\mathcal P_1$, we must have $r\leq l-1$.

Consider a copy of $\mathcal P_{r}*\dots*\mathcal P_1$ in $\mathcal F_b$, which we call $\mathcal P_{r}*\dots*\mathcal P_1$ for simplicity. Let $C$ be a set above it. If any element in $\mathcal P_r$ has size at least $\sum_{j=1}^r |\mathcal P_j|$, then we are done. We may therefore assume that all sets in $\mathcal P_r$ have size less than $\sum_{i=1}^r|\mathcal P_i|$.
        
Let $D$ be the union of the elements of $\mathcal P_{r-1}*\dots*\mathcal P_1$. By the inductive hypothesis, we have that $|D|\geq \sum_{j=1}^{r-1} |\mathcal P_j|$. If $\mathcal P_{r-1}$ is a chain, then $D$ is the top point of $\mathcal P_{r-1}$, and so it is a strict subset of all elements in $\mathcal P_r$. Hence, all sets of $\mathcal P_r$ have sizes at least $\sum_{j=1}^{r-1}|\mathcal P_j|+1$. If $\mathcal P_{r-1}$ is an antichain, then, by construction, no set in $\mathcal F_b$ has size $\sum_{j=1}^{r-1}|\mathcal P_j|$. This means that either $|D|=\sum_{j=1}^{r-1}|\mathcal P_j|$ in which case $D\notin\mathcal F_b$ and so $D$ is a strict subset of all elements in $\mathcal P_r$, or $|D|\geq\sum_{j=1}^{r-1}|\mathcal P_j|+1$. In both cases we get that all sets in $\mathcal P_r$ have sizes between $\sum_{j=1}^{r-1} |\mathcal P_j|+1$ and $\sum_{j=1}^r |\mathcal P_j|-1$ inclusive. Since all these sets are in $\mathcal F_b$, we get that $\mathcal P_r$ is contained in $\binom{[m]}{T_r}$. 

If $\mathcal P_r$ is a chain, then its elements have pairwise distinct sizes. Since $|T_r| = |\mathcal P_r|$, this implies that the top element of the chain must have size equal to $\max T_r = \sum_{j=1}^r |\mathcal P_r|$, and so $|C|\geq\sum_{j=1}^r |\mathcal P_r|$.        
        
If $\mathcal P_r$ is an antichain, then all its elements strictly contain $D$. If we remove $D$ from all of them, we get an antichain of the same size. More precisely, let $\widehat{\mathcal P_r}=\{X\setminus D : X\in \mathcal P_r\}$, which is isomorphic to $\mathcal P_r$. Let also $\widehat{C} = \cup_{X\in \widehat{\mathcal P_r}} X$, which is the same as $C\setminus D$. It is therefore enough to show that $|\widehat{C}|\geq |\mathcal P_r|$. 

Since $\mathcal P_r$ is an antichain, we have that $T_r=\{\sum_{j=1}^{r-1}|\mathcal P_j|+1\}$, and so all sets of  $\widehat{\mathcal P_r}$ are of the form $X\setminus D$ for some $X\in \binom{[m]}{\sum_{j=1}^{r-1}|\mathcal P_j|+1}$. Since by construction $D$ is a subset of $[m]$, we get that all sets of $\widehat{\mathcal P_r}$ are in $\binom{[m]\setminus D}{\sum_{j=1}^{r-1} |P_j|+1-|D|}$. Moreover, all sets of $\mathcal P_r$ are subsets of $\widehat{C}$, hence $\widehat{\mathcal P_r}\subseteq  \binom{\widehat{C}}{\sum_{j=1}^{r-1} |\mathcal P_j|+1-|D|}$, which implies that $|\widehat{\mathcal P_r}|\leq \binom{|\widehat{C}|}{\sum_{j=1}^{r-1} |\mathcal P_j|+1 - |D|}$. Since $|D|\geq \sum_{j=1}^{r-1}|\mathcal P_j|$, we indeed get that $|\widehat{\mathcal P_r}|\leq|\widehat{C}|$, which completes the proof of the claim.
\end{proof}
\begin{claim7}$|B|\leq n-\sum_{j=i+1}^{l} |\mathcal P_j|$.
\end{claim7}
\begin{proof}
By taking complements, the result follows by Claim 1.
\end{proof}
By construction, $A$ is a union of elements of $\mathcal F_b$ and $B$ an intersection of elements in $\mathcal F_t$. Thus $A\subseteq [m]$ and $[n]\setminus[m]\subseteq B$. Since $A$ is below $\mathcal P_i$ and $B$ is above $\mathcal P_i$, we also have $A\subseteq B$. We can therefore write $B = ([n]\setminus [m]) \cup A \cup W$ for some $W\subseteq [m]\backslash A$. Since $|B|\leq n-\sum_{j=i+1}^{l}|\mathcal P_j|$, we get that $|A|+|W|=|B\cap[m]|\leq\sum_{j=1}^{i}|\mathcal P_j|-1$.
    
We observe that If $X\in\mathcal P_i$ then $A\subseteq X\subseteq B$. If $X\in\mathcal F_b$, then $|B\cap [m]| \geq |X|\geq |A|\geq\sum_{j=1}^{i-1}|\mathcal P_i|$, and so $\sum_{j=1}^{i} |\cP_j|-1\geq |X|\geq \sum_{j=1}^{i-1}|\cP_j|$. By looking at whether $\mathcal P_{i-1}$ is a chain or an antichain (the same way as in Lemma 1 above), we get that $|X|\in T_i$. Similarly, if $X\in\mathcal F_t$ we get that $|[n]\setminus X|\in U_i$.
 
Suppose first that $\mathcal P_i$ is a chain, and let $X_1,\dots,X_{|\mathcal P_i|}$ be its elements, where $X_1\subsetneq X_2\subsetneq \dots\subsetneq X_{|\cP_i|}$. As discussed above, for every $X_k\in \mathcal P_i$, we have that $\sum_{j=1}^{i} |\mathcal P_j|-1\geq |X_k\cap [m]|\geq \sum_{j=1}^{i-1}|\mathcal P_j|$. Thus, by the pigeonhole principle, there exist $k_1\neq k_2$ such that $|X_{k_1}\cap[m] |=|X_{k_2}\cap [m]|$, and, without loss of generality, we may assume that $k_1<k_2$. This means that $X_{k_1}\subsetneq X_{k_2}$. 

Since $X_{k_1}\cap [m] = X_{k_2}\cap[m]$, $X_{k_1}$ and $X_{k_2}$ cannot be both in $\mathcal F_b$ or both in $\mathcal F_{t}$. We therefore must have $X_{k_1}\in\mathcal F_b$, $X_{k_2}\in\mathcal F_t$, and $X_{k_2}=[n]\setminus[m]\cup X_{k_1}$. This immediately implies that $k_2 = k_1+1$. Now, for all $1\leq j\leq k_1$, we have $X_j\in \cF_b$, which by the above gives us that $ |X_j|\in T_i$. Therefore we have that $\sum_{k=1}^{i-1}|\mathcal P_k|+1\leq |X_1|<|X_2|<\dots<|X_{k_1-1}|<|X_{k_1}|$, which implies that $|X_{k_1}|\geq\sum_{k=1}^{i-1}|\mathcal P_k|+k_1 $. Similarly, by taking complements, we get that $|[n]\setminus X_{k_1+1}|\geq \sum_{k=i+1}^l |\mathcal P_k | + (|\mathcal P_i| - k_1)$, or equivalently, $|[m]\setminus X_{k_1}|\geq \sum_{k=i}^l|\mathcal P_k|-k_1$. Rearranging the last inequality we get that $|X_{k_1}|\leq\sum_{k=1}^{i-1}|\mathcal P_k|+k_1-1$, a contradiction.

Finally, suppose that $\mathcal P_i$ is an antichain. Let its elements be $A_{t_1}, \dots A_{t_{|\mathcal P_i|}}$. If $A_{t_k}\in\mathcal F_b$, then $|A_{t_k}|=\sum_{j=1}^{i-1}|\mathcal P_j|+1$ and $A_{t_k}=A\cup T_{t_j}$ for some $T_{t_j}\subseteq W$. Since $|A|\geq\sum_{j=1}^{i-1}|\mathcal P_j|$, then $T_{i}$ is either empty or a singleton. However, it cannot be empty as it would then be equal to $A$ which is comparable to everything in $\mathcal P_i$. By symmetry, if $A_{t_k}\in\mathcal F_t$, then $[n]\setminus A_{t_k}=([n]\setminus B)\cup L_{t_k}$ where $L_{t_k}$ is a singleton. In this case, since $A_{t_k}=([n]\setminus[m])\cup A\cup T_{t_k}$ for some $T_{t_k}\subseteq W$, we must have $T_{t_k}=W\setminus L_{t_k}$. In conclusion, those elements of $\mathcal P_i$ that are in $\mathcal F_b$ are equal to $A$ union a singleton of $W$, and those that are in $\mathcal F_t$ are equal to $([n]\setminus[m])\cup A$ union the complement of a singleton in $W$.

Let $X_{t_k}=A\cup T_{t_k}$ if $X_{t_k}\in\mathcal F_b$, and $X_{t_k}=([n]\setminus[m])\cup A\cup T_{t_k}$ if $X_{t_k}\in\mathcal F_t$. In light of the above observations, it is trivial to see that $T_{t_1},\dots,T_{t_{|\mathcal P_i|}}$ form an anti chain in $W$. Moreover, this antichain is comprised of singletons and complements of singletons. The LYM inequality then tells us that this antichain has size at most $|W|$, thus $|\mathcal P_i|\leq|W|$.

However, we have shown above that $|A|+|W|\leq\sum_{j=1}^i|\mathcal P_j|-1$, which combined with the fact that $|A|\geq\sum_{j=1}^{i-1}|\mathcal P_j|$, gives that $|W|\leq|\mathcal P_i|-1$,  contradiction.
\end{proof}

We therefore have that $\mathcal F_b\cup\mathcal F_t$ is $\mathcal P_l*\dots*\mathcal P_1$-free. We now saturate this family arbitrarily and obtain a $\mathcal P_1*\dots*\mathcal P_l$-saturated family $\mathcal F$ that contains $\mathcal F_b\cup\mathcal F_t$. We are left to prove that $|\mathcal F| = O(n)$.

Let $T$ be a subset of $[m]$, and define the set $C_T = \{X\in \mathcal F : X\cap [m] = T\}$. We obviously have that $\mathcal F= \cup_{T\subseteq [m]}C_T$. In order to finish the proof, it is therefore enough to show that for every $T\subseteq[m]$, we have that $|C_T|=O(n)$. We start with the following lemma.

\begin{lemma1}\label{lemmabelowC_T}
Let $2\leq i\leq l$ and $T$ a subset of $[m]$ such that $\sum_{j=1}^{i-1}|\mathcal P_j|\leq |T|$. Then $\mathcal P(T)\cap\mathcal F_b$ contains an induced  copy of $\mathcal P_1*\dots*\mathcal P_{i-1}$.   
\end{lemma1}
\begin{proof}
We will prove this by induction on $i$. When $i=2$, it is easy to check that there is a $\mathcal P_1$ below $T$ (inclusively), both when $\mathcal P_1$ is a chain or an antichain. Assume now that $i\geq 3$ and that the lemma is true for $i-1$. Without loss of generality, we may assume that $T = [t]$ for some $t\geq\sum_{j=1}^{i-1}|\mathcal P_j|$. Let $T'=[\sum_{j=1}^{i-2}|\mathcal P_j|]$. By our inductive hypotheses, there exists a copy of $\mathcal P_1*\dots*\mathcal P_{i-2}$ in $\mathcal F_b$ below $T'$. It is therefore enough to show that there exists a copy of $\mathcal P_{i-1}$ in $\{X\in \mathcal F_b : [\sum_{j=1}^{i-2}|\mathcal P_j|]\subsetneq X\subseteq [\sum_{j=1}^{i-1}|\mathcal P_j|]\}$. 

If $\mathcal P_{i-1}$ is a chain, then $\{[q] :\sum_{j=1}^{i-2}|\mathcal P_j|+1\leq  q\leq\sum_{j=1}^{i-1}|\mathcal P_j|\}$ is such a copy, while if $\mathcal P_{i-1}$ is an antichain, then $\{[\sum_{j=1}^{i-2}|\mathcal P_j|]\cup \{a\} : \sum_{j=1}^{i-2}|\mathcal P_j|+1\leq a\leq \sum_{j=1}^{i-1}|\mathcal P_j|\}$ is again such a copy. This finishes the induction hypothesis and consequentially the proof of the lemma.
\end{proof}
By symmetry, we also get the following.
\begin{lemma1}\label{lemmaaboveC_T}
Let $1\leq i\leq l-1$ and $T$ a subset of $[m]$ such that $|T|\leq\sum_{j=1}^{i}|\mathcal P_j|-1$. Then there exists a copy of $\mathcal P_{i+1}*\dots*\mathcal P_{l}$ above $T\cup([n]\setminus[m])$ in $\mathcal F_t$. 
\end{lemma1}

\begin{lemma1}\label{chainC_T}
Let $i\in[l]$ and $T$ a subset of $[m]$ such that $\sum_{j=1}^{i-1}|\mathcal P_j|\leq |T|\leq\sum_{j=1}^{i}|\mathcal P_j|-1$. If $\mathcal P_i$ is a chain, then $C_T\subseteq \mathcal F_b\cup\mathcal F_t$.
\end{lemma1}
\begin{proof}
Let $X\in \mathcal P([n])\setminus(\mathcal F_b\cup\mathcal F_t)$ such that $X\cap [m] = T$. We will show that $\mathcal F_b\cup \mathcal F_t\cup \{X\}$ contains a copy of $\mathcal P_1*\dots*\mathcal P_l$, hence $X\notin \mathcal F$ and consequently $X\notin\ C_T$ -- in other words, $C_T\subseteq\mathcal F_b\cup\mathcal F_t$. Without loss of generality, we may assume that $T = [t]$ for some $t\in [\sum_{j=1}^{i-1}|\mathcal P_j|,\sum_{j=1}^{i}|\mathcal P_j|-1]$. 

If $2\leq l\leq l-1$, we have by Lemma~\ref{lemmabelowC_T} and Lemma ~\ref{lemmaaboveC_T} that there exists a copy of $\mathcal P_1*\dots*\mathcal P_{i-1}$ in $\mathcal F_b$ below $[\sum_{j=1}^{i-1}|\cP_j|]$, and a copy of $\mathcal P_{i+1}*\dots*\mathcal P_l$ in $\mathcal F_t$ above $[\sum_{j=1}^{i}|\mathcal P_j|-1]\cup ([n]\setminus[m])$. Therefore, it suffices to show that there is a copy of $\mathcal P_i$ in $\mathcal F_b\cup \mathcal F_t \cup \{X\}$ strictly between $[\sum_{j=1}^{i-1}|\mathcal P_j|]$ and $[\sum_{j=1}^{i}|\mathcal P_j|-1]\cup ([n]\setminus[m])$. However, $\{[q] : \sum_{j=1}^{i-1}|\mathcal P_j| +1\leq q\leq t\}\cup \{X\}\cup \{[q]\cup([n]\setminus[m]) : t\leq q\leq\sum_{j=1}^{i} |\mathcal P_j|-2 \}$ is such a copy of $\mathcal P_i$.

If $i=1$, by Lemma~\ref{lemmaaboveC_T} there exists a copy of $\mathcal P_{2}*\dots*\mathcal P_l$ in $\mathcal F_t$ above $[|\mathcal P_1|-1]\cup ([n]\setminus[m])$. But then $\{[q]:1\leq q\leq t\}\cup\{X\}\cup\{[q]\cup([n]\setminus[m]):q\leq t\leq |\mathcal P_2|-1\}$ is a copy of $\mathcal P_1$ is $\mathcal F_b\cup\mathcal F_t\cup\{X\}$ strictly below $[|\mathcal P_1|-1]\cup ([n]\setminus[m])$. A completely analogous argument deals with the case $i=l$.
 
\end{proof}\label{antichainC_T}
\begin{lemma1}Let $i\in[l]$ and $T$ a subset of $[m]$ such that $\sum_{j=1}^{i-1}|\mathcal P_j|\leq |T|\leq\sum_{j=1}^{i}|\mathcal P_j|-1$. If $\mathcal P_i$ is an antichain, then $|C_T|\leq|\mathcal P_i|(n+1)$.
\end{lemma1}
\begin{proof}
Assume without loss of generality that $T=[t]$ for some $\sum_{j=1}^{i-1}|\mathcal P_j|\leq t\leq\sum_{j=1}^{i}|\mathcal P_j|-1$. Suppose that there exists and antichain of size $|\mathcal P_i|$ in $C_T$, say $X_1,\dots, X_{|\mathcal P_k|}$. Then $[t]\subsetneq X_i\subsetneq [t]\cup([n]\setminus[m])$ for all $1\leq i\leq |\mathcal P_i|$. If $2\leq i\leq l-1$, then by Lemma~\ref{lemmabelowC_T} we get a copy of $\mathcal P_1*\dots\mathcal P_{i-1}$ below $[t]$, and by Lemma~\ref{lemmaaboveC_T} we get a copy of $\mathcal P_{i+1}*\dots*\mathcal P_l$ above $[t]\cup([n]\setminus [m])$. These two copies together with the antichain in $C_T$ give us a copy of $\mathcal P_1*\dots*\mathcal P_l$ in $\mathcal F$, a contradiction. A similar argument also gives the same contradiction if $i=1$ or $i=l$.

Therefore $C_T$ does not contain an antichain of size $|\mathcal P_i|$, which implies by Dilworth that $|C_T|\leq(n+1) |\mathcal P_i|$.
\end{proof}
Finally, since the intervals $[\sum_{j=1}^{i-1}|\mathcal P_j|, \sum_{j=1}^{i}|\mathcal P_j|-1]$ for $i\in[l]$ partition $[m]$, and $\mathcal F=\cup_{T\subseteq [m]}C_T$, Lemma~\ref{chainC_T} and Lemma~\ref{antichainC_T} give that $|\mathcal F|\leq|\mathcal F_b\cup\mathcal F_t|+\sum_{j=1}^l|\mathcal P_j|(n+1)=O(n)$, which finishes the proof of the theorem.

\end{proof}
As explained in the beginning of the section, the above result, together with Theorem~\ref{maintheorem}, gives the main result of this section.
\begin{theorem}\label{multipartitelinear} Let $n_1,\dots,n_k$ be positive integers. If $n_1=n_2=\dots=n_k=1$, then $\text{sat}^*(n,K_{n_1,\dots,n_k}) \break = O(1)$. Otherwise, $\text{sat}^*(n,K_{n_1,\dots,n_k}) =\Theta(n)$.
\end{theorem}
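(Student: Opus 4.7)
The plan is to combine the two main results already in hand: Theorem~\ref{maintheorem}, which gives the linear lower bound for any poset of the form $\mathcal P_1*\mathcal A_k*\mathcal P_2$ with $k\geq 2$, and the upper bound theorem proven immediately above (stating $\text{sat}^*(n,K_{n_1,\dots,n_k})=O(n)$), together with the classical fact from \cite{gerbner2013saturating} that chains have bounded saturation number.

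First I would dispose of the trivial case. If $n_1=n_2=\dots=n_k=1$, then $K_{n_1,\dots,n_k}$ is just a chain of length $k$, and by \cite{gerbner2013saturating} we have $\text{sat}^*(n,K_{n_1,\dots,n_k})=O(1)$.

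For the remaining case, suppose some $n_i\geq 2$. The upper bound $\text{sat}^*(n,K_{n_1,\dots,n_k})=O(n)$ is already established by the preceding theorem in this section. For the matching lower bound, I would use the identification $K_{n_1,\dots,n_k}=\mathcal A_{n_k}*\dots*\mathcal A_{n_1}$ and, using associativity of the $*$ operation, set $\mathcal P_1=\mathcal A_{n_k}*\dots*\mathcal A_{n_{i+1}}$ (taken to be the empty poset if $i=k$) and $\mathcal P_2=\mathcal A_{n_{i-1}}*\dots*\mathcal A_{n_1}$ (empty if $i=1$). Then $K_{n_1,\dots,n_k}=\mathcal P_1*\mathcal A_{n_i}*\mathcal P_2$ with $n_i\geq 2$, so Theorem~\ref{maintheorem} applies and yields $\text{sat}^*(n,K_{n_1,\dots,n_k})\geq\tfrac{n+1}{9}$. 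Combined with the upper bound this gives $\text{sat}^*(n,K_{n_1,\dots,n_k})=\Theta(n)$.

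There is no genuine obstacle here; the only thing to check carefully is that Theorem~\ref{maintheorem} as stated really does allow $\mathcal P_1$ or $\mathcal P_2$ to be empty, which is built into the convention for $*$ recalled in the introduction, so the boundary cases $i=1$ and $i=k$ cause no difficulty.
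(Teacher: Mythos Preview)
Your proposal is correct and matches the paper's own argument exactly: the paper simply states that the upper bound theorem of Section~6 together with Theorem~\ref{maintheorem} (and the bounded chain result from \cite{gerbner2013saturating}) yields Theorem~\ref{multipartitelinear}, and your write-up is precisely the spelling out of that combination, including the decomposition $K_{n_1,\dots,n_k}=\mathcal P_1*\mathcal A_{n_i}*\mathcal P_2$ and the observation that the empty-poset convention covers $i=1$ and $i=k$.
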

\section{Further remarks}
It is natural to ask the question of what would happen if one were to replace the antichain with a more general class of posets. In other words, what can we say about the saturation number of $\mathcal P_1*\mathcal Q*\mathcal P_2$, where $\mathcal Q$ belongs to a class of posets that strictly contains the non-trivial antichains?

It was shown in \cite{gluing} that the posets that have the UCTP (unique cover twin property) behave relatively well under linear sums, in the context of poset saturation. In particular, it was showed that if at least one of $\mathcal P_1$ and $\mathcal P_2$ has the UCTP and at least two elements, then the saturation number of $\mathcal P_1*\mathcal P_2$ is unbounded. This prompts the question: is the saturation number of $\mathcal P_1*\mathcal Q*\mathcal P_2$ unbounded for any posets $\mathcal P_1$ and $\mathcal P_2$, and any poset $\mathcal Q$ with the UCTP and at least two elements? The answer is yes.

We omit the full proof as it is a straightforward generalization of everything done in this paper. However, we provide an overview. First we observe that if $\mathcal Q$ has the UCTP and a unique maximal (minimal) element, removing that element results in a poset with the UCTP that does not have a unique maximal (minimal) element. This, coupled with Corollary~\ref{poset-point-poset-corollary} allows us to assume that $\mathcal P_1$ has a unique minimal element, $\mathcal P_2$ has a unique maximal element, and $\mathcal Q$ has neither.

The goal is to show that if $\mathcal F$ is a $\mathcal P_1*\mathcal Q*\mathcal P_2$-saturated family, then it separates the ground set. If there exists a saturated family that does not separate the ground set, in the sense that there exist $i\in[n]$ such that $A\setminus B\neq\{i\}$ for all $A,B\in\mathcal F$, then by `cloning' $i$ (as shown in \cite{freschi2023induced}), we may assume that there exists a bounded number of $i\in[n]$ such that $A\setminus B=\{i\}$ for some $A,B\in\mathcal F$. Next, we replace the condition  `$S,S\cup\{i\}\in\mathcal F$' in Lemma~\ref{firstprelim} and Lemma~\ref{secondprelim} with `there exists $A,B\in\mathcal F$ such that $A\setminus B=\{i\}$'. With this modification, and the assumption that the set $C$ in Lemma~\ref{mainprelim} is bounded, all the other proofs go through, showing that $\mathcal F$ is at least linear, a contradiction. Therefore, we have the following result.

\begin{theorem}
Let $\mathcal P_1$ and $\mathcal P_2$ be two posets, and $\mathcal Q$ be a poset with the unique cover twin property and at least two elements. Then $\text{sat}^*(n,\mathcal P_1*\mathcal A_k*\mathcal P_2)\rightarrow\infty$ as $n\rightarrow \infty$.
\end{theorem}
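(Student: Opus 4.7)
The plan is to mirror the structure of the proof for $\mathcal P_1*\mathcal A_k*\mathcal P_2$, replacing the role played by the antichain middle layer with the unique cover twin property of $\mathcal Q$, and carrying out one extra ``cloning'' reduction on the ground set so that the preliminary lemmas can be re-used almost verbatim.

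First I would perform the setup reduction. Observe that if $\mathcal Q$ has the UCTP and a unique minimum (respectively unique maximum), then removing that element leaves a poset that still has the UCTP: for any covering pair $q_1<q_2$ in the reduced poset their old twin survives, and by definition the unique extremal element cannot itself be a twin for any interior cover relation that does not involve it. Combined with Corollary~\ref{poset-point-poset-corollary} applied twice (once on each side), this lets me assume that $\mathcal P_1$ has a unique minimum, $\mathcal P_2$ has a unique maximum, and $\mathcal Q$ has neither. Next I would apply the cloning reduction from \cite{freschi2023induced}: given any $\mathcal P_1*\mathcal Q*\mathcal P_2$-saturated family $\mathcal F$ on $[n]$, if many coordinates $i\in[n]$ fail to be ``separated'' by $\mathcal F$ (meaning $A\setminus B\neq\{i\}$ for every $A,B\in\mathcal F$), then cloning those coordinates into $n$ total ones produces a new saturated family with essentially the same cardinality but now with the property that only a bounded number of coordinates are non-separated. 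Hence the task reduces to proving that a $\mathcal P_1*\mathcal Q*\mathcal P_2$-saturated family for which only $O(1)$ coordinates fail separation must still be of linear size.

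The main body of the proof would then re-run Sections~3--5 of the paper, with two small modifications. The first modification is in Lemmas~\ref{firstprelim} and \ref{secondprelim}: the alternative ``there exists $S$ with $i\notin S$ and $S,S\cup\{i\}\in\mathcal F$'' must be replaced by ``there exist $A,B\in\mathcal F$ with $A\setminus B=\{i\}$'', which is exactly the separation condition. The second modification is genuinely poset-theoretic and will be the main obstacle: in the three cases of Lemma~\ref{firstprelim} where the intruder set $A\cup\{i\}$ lands in the middle $\mathcal A_k$ part of the forbidden copy, the original argument used nothing more than the fact that $\mathcal A_k$ is an antichain with $k\ge 2$ elements, so $A\cup\{i\}$ has an incomparable companion $T_1$ that forces the desired containments. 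For a UCTP poset $\mathcal Q$, the analogue is obtained via the twin: the element $q$ played by $A\cup\{i\}$ sits in a cover relation with some $q'\in \mathcal Q$, so by the UCTP there exists a twin $q^*$ comparable to exactly one of $q,q'$, and the set playing $q^*$ in $\mathcal F$ provides a substitute for the role of $T_1$ in the original proof. This is where I expect to spend real effort: one must chase through the cases ``twin above $q$'', ``twin below $q$'', ``twin comparable to $q'$ but not $q$'', etc., verifying in each case that the resulting set either certifies $i\in M$ for some $M\in\mathcal M$, or certifies $i\notin N$ for some $N\in\mathcal N$, or yields the required separating pair.

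Granted Lemmas~\ref{firstprelim} and \ref{secondprelim} in their modified form, the counting in Lemma~\ref{mainprelim} goes through unchanged once the acyclic graph on $\mathcal F$ is replaced by the graph whose edges are pairs $\{A,B\}\in\binom{\mathcal F}{2}$ with $A\triangle B$ a singleton in the non-separated coordinate set: this graph is still acyclic by the same parity argument, and the bounded number of non-separating coordinates (from the cloning step) contributes only a constant to $|\mathcal F|$. The subsequent Lemmas~\ref{lemma_no_C_2_in_L_cup_N}--\ref{boundingW} never used the antichain structure of $\mathcal A_k$ beyond the facts that it has no unique maximum and that $\bullet*\mathcal A_k*\mathcal P_2$ can be defined as a subposet whose maximal element sits above the rest; both of these hold for $\mathcal Q$ since we reduced to the case where $\mathcal Q$ has no unique maximum. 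Finally, Lemma~\ref{boundingsizecalAcalB} and Theorem~\ref{maintheorem} apply verbatim, since their proofs depend only on the combinatorial object $(\mathcal L,\widehat{\mathcal N})$ and not on the inner structure of the middle poset. Pushing all these through yields $|\mathcal F|\ge \frac{n+1}{9}-O(1)$, which tends to infinity and thus gives the claimed unboundedness.
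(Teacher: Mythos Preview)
Your overall architecture matches the paper's sketch: reduce via Corollary~\ref{poset-point-poset-corollary} and the UCTP to the case where $\mathcal P_1$ has a unique minimum, $\mathcal P_2$ has a unique maximum, and $\mathcal Q$ has neither; then use the cloning trick of \cite{freschi2023induced}; then rerun Sections~3--5 with the condition ``$S,S\cup\{i\}\in\mathcal F$'' weakened to ``$A\setminus B=\{i\}$ for some $A,B\in\mathcal F$''. That is exactly what the paper does.

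There is, however, one genuine error. Your cloning step is stated in the wrong direction. Cloning a \emph{non}-separated coordinate produces \emph{more} non-separated coordinates, not fewer. The correct reduction is: assuming $\text{sat}^*$ is bounded, take a small saturated family, clone a non-separated coordinate arbitrarily often, and obtain for every $N$ a saturated family on $[N]$ of bounded size in which only $O(1)$ coordinates \emph{are} separated. In the notation of Lemma~\ref{mainprelim}, this means $|C|=O(1)$ by hypothesis, so the acyclic--graph argument is simply bypassed, and one gets $|A|+|B|\geq n-O(1)$ for free. Your version (``only $O(1)$ coordinates fail separation'') would give $|C|=n-O(1)$, for which the acyclic--graph bound $|C|\le|\mathcal F|-1$ cannot be salvaged under the weaker separation condition $A\setminus B=\{i\}$ (the corresponding graph is no longer acyclic), and your paragraph about Lemma~\ref{mainprelim} does not actually go through.

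A second, smaller point: you anticipate that the UCTP twin will be needed in the ``$A\cup\{i\}$ lands in the $\mathcal Q$ part'' case of Lemma~\ref{firstprelim}. In fact, once $\mathcal Q$ has been reduced to have no unique minimum, that case is handled directly: if every $T\neq A\cup\{i\}$ in the $\mathcal Q$ part contained $i$, then $A\cup\{i\}\subsetneq T$ for all such $T$, forcing the role of $A\cup\{i\}$ to be the unique minimum of $\mathcal Q$. Similarly, in Lemma~\ref{secondprelim} the replacement of $Y\cup\{i\}$ by $Y$ can fail for two reasons, and one of them yields exactly a pair $T,Y\in\mathcal F$ with $T\setminus Y=\{i\}$, which is precisely the weakened condition~(c). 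So the twin never has to be invoked explicitly inside the case analysis; the UCTP is used only in the initial stripping of unique extremal elements.
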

The final thing we would like to mention is that it was brought to our attention by Barnab\'as Janzer that in the case where one of the posets is empty, the saturation number is at least $n+1$. In other words, $\text{sat}^*(n,\mathcal P*\mathcal A_k)\geq n+1$, and $\text{sat}^*(n,\mathcal A_k*\mathcal P)\geq n+1$, for any finite poset $\mathcal P$ and integer $k\geq 2$. The main idea is to show that, given a $\mathcal P*\mathcal A_k$-saturated family, there exists for every $i$ of the ground set an element $S$ of the family such that both $S$ and $S\cup\{i\}$ are in the family. This is achieved by looking at singletons. For those that are in the family, $S=\emptyset$, and for those that are not, we look at a copy of $\mathcal P*\bullet$ in the family such that all the elements of $\mathcal P$ contain that singleton, the minimal element does not contain that singleton, and the minimal element has maximal cardinality among all such copies -- then we take $S$ to be the minimal element of such a copy of $\mathcal P*\bullet$.

\bibliographystyle{amsplain}
\bibliography{references}
\Addresses
\end{document}